\newtheorem{theorem}{Theorem}[section]
\newtheorem{lemma}[theorem]{Lemma}
\newtheorem{corollary}[theorem]{Corollary}
\newtheorem{proposition}[theorem]{Proposition}
\newcounter{maintheorem}
\theoremstyle{remark}
\newtheorem{remark}[theorem]{Remark}
\theoremstyle{definition}
\newtheorem{definition}[theorem]{Definition}
\numberwithin{equation}{section}
\newcommand{\R}{\mathbb{R}}
\renewcommand{\H}{\mathbb{H}}
\newcommand{\N}{\mathbb{N}}
\newcommand{\e}{\varepsilon}
\newcommand{\nn}[1]{{\left\vert\kern-0.25ex\left\vert\kern-0.25ex\left\vert #1 
		\right\vert\kern-0.25ex\right\vert\kern-0.25ex\right\vert}}
\newcommand{\F}{\mathcal{F}}
\newcommand{\Lip}{\mathrm{Lip}}
\renewcommand{\leq}{\leqslant}
\renewcommand{\geq}{\geqslant}
\renewcommand{\d}{\mathrm{d}}
\renewcommand{\div}{\mathrm{div}}
\newcommand{\essup}{\mathop{\mathrm{essup}}}
\let\phi\varphi
\renewcommand{\S}{\mathbb{S}}
\newcounter{smallromans}
\def\XXint#1#2#3{{\setbox0=\hbox{$#1{#2#3}{\int}$}
		\vcenter{\hbox{$#2#3$}}\kern-.5\wd0}}
\begin{document}
	\title[Lipschitz-Free Spaces over Riemannian Manifolds]{Isometric Representation of Lipschitz-Free Spaces\\over Connected Orientable Riemannian Manifolds}
	\author{Franz Luggin}
	\address{Universit\"{a}t Innsbruck, Department of Mathematics, Technikerstra\ss e 13, 6020 Innsbruck, Austria}
	\email{Franz.Luggin@student.uibk.ac.at}	
	\thanks{}
	
	\keywords{Lipschitz-free space, Banach space of Lipschitz functions, Riemannian manifold, Riemannian geometry, Isometric Theory of Banach Spaces, Distributional Form}
	\subjclass[2020]{46B04 (primary), and 46B20, 46E15, 53C20, 58A30 (secondary)}
	\date{}

	\begin{abstract}
		We show that the Lipschitz-Free Space over a connected orientable $n$-dimensional Riemannian manifold $M$ is isometrically isomorphic to a quotient of $L^1(M,TM)$, the integrable sections of the tangent bundle $TM$, if $M$ is either complete or lies isometrically inside a complete manifold $N$. Two functions are deemed equivalent in this quotient space if their difference has distributional divergence zero.
		
		This quotient is the pre-annihilator of the exact essentially bounded currents, and if $M$ is simply connected, one may replace ``exact'' with ``closed'' currents.
	\end{abstract}

	\maketitle
	
	\section{Introduction}
	\subsection{Lipschitz Spaces}
	Given a metric space with a distinguished point $0_M\in M$, let $\Lip_0(M)$ be the space of all functions $F\colon M\to \R$ which are Lipschitz on $M$ and satisfy $F(0_M)=0$. When equipped with the usual addition and scalar multiplication of functions, this space is a vector space on which the Lipschitz constant $\Lip(F):=\sup_{x\neq y}\frac{|F(x)-F(y)|}{d(x,y)}$ is a norm, $\|F\|_\Lip:=\Lip(F)$.
	We can now define the Lipschitz-Free space $\F(M)$ as a predual of $\Lip_0(M)$ (under certain conditions the (strongly) unique one, see e.g.\@ \cite[Thm. 3.26-28]{Weaver}). To do this, define $\F(M):=\overline{\mathrm{span}\{\delta(x)\colon x\in M\}}$, where $\delta(x)$ is an evaluation functional such that $\langle F,\delta(x)\rangle=F(x)$ for all $F\in\Lip_0(M)$, and for all $x\in M$.
	
	These spaces have been a topic of intense research in recent years, starting with \cite{GK2003} and the first edition of \cite{Weaver}, and \cite{Godefroy-survey} has sparked renewed interest in it. A non-exhaustive selection of relevant papers from the last 10 years include \cite{CCGMR2019, CJ2017, D2015, FG2022, GL2018}. Strong results are available in special cases, as e.g.\@ when $M$ is a Banach space, see Kaufmann \cite{K2015}. Moreover, Ostrovska and Ostrovskii studied the isometric structure of Lipschitz-Free spaces over finite metric spaces in \cite{OO2022}, and investigated which Lipschitz-Free spaces contain isometric copies of $\ell^1$ in \cite{OO2019, OO2020, OO2024}.
	
	In general, isomorphisms or isometries of $\F(M)$ to well-known spaces allow for the transfer of isomorphic and isometric properties, respectively, like approximation properties, existence of Schauder bases, etc. For example, recent results include Gartland in \cite{Gartland} showing that $\F(\H^d)\cong\F(\R^d)$ isomorphically, and Albiac, Ansorena, C\'uth, Doucha in \cite{AACD} showing $\F(\mathbb S^d)\cong\F(\R^d)$, again, isomorphically.
	
	The main result of this paper is that the Lipschitz space $\Lip_0(M)$ of a connected Riemannian manifold $M$ is isometrically isomorphic to the exact $L^\infty$-currents on $M$
	\begin{align*}
		\Lip_0(M)\equiv \{f \in L^\infty(M,T^*M)\colon \exists g\colon M\to\R\colon f=\d g\}
	\end{align*}
	via an isometry that is weak$^*$-to-weak$^*$ continuous. In the particular case when $M$ is simply connected, by Corollary~\ref{cor:closed-currents}, this is equivalent to the space of closed $L^\infty$ currents (all $f\in L^\infty(M,T^*M)$ such that $\d f=0$). This is useful since, in general, closedness of a form is much easier to verify than exactness.
	
	As corollaries of that, if $M$ is complete, the Lipschitz-Free space $\F(M)$ is isometric to $L^1(M,TM)/\{g\in L^1(M,TM)\colon \div(g)=0\}$, the integrable sections modulo the equivalence relation $f\sim g\iff \div(f-g)=0$ in the sense of distributions on $M$ by Theorem~\ref{thm:isometry-of-lip-free}; whereas if $M$ lies isometrically inside a larger complete manifold $N$, then the same result holds with a slightly modified quotient consisting of all integrable sections whose extension to $N$ by zero has distributional divergence zero: $\{g\in L^1(M,TM)\colon\exists \hat g\in L^1(N,TN)\colon \hat g|_M=g,\hat g|_{N\setminus M} = 0, \div(\hat g)=0\}$ (by Corollary~\ref{cor:domain-inside-complete}).
	
	Complete connected Riemannian manifolds include the Euclidean spaces $\R^n$, the hyperbolic spaces $\H^n$ and the spheres $\S^n$, so this result covers them and all open connected and geodesically convex subsets thereof.
	
	This is a generalization of the result by C\'uth, Kalenda and Kaplick\'y in \cite{CKK2016}, where the same is shown for nonempty convex open subsets of $\R^n$ with the metric induced by any norm on $\R^n$, and follows roughly the same structure, whereas Ostrovska and Ostrovskii discuss the isometric structure of Lipschitz-Free spaces over finite metric spaces in \cite{OO2022}.
	
	A different generalization by Flores was recently published as a preprint in \cite{F2024}, where $M$ is instead a domain in a finite-dimensional normed space $E$ equipped with the intrinsic metric. If $M$ lies in a Riemannian manifold, this result follows from the one present in this paper (see Remark \ref{rem:comparison-domain-results}), since in Flores' theorem, $M$ as a whole forms some domain within a normed space $E$, with the metric on $M$ coinciding with the induced intrinsic metric from $E$, and of course, since $E$ is a finite-dimensional normed space, it is complete.
	
	\subsection{Theory of Distributions on Smooth Manifolds}
	This paper assumes familiarity with the basic theory of Riemannian manifolds, working with smooth charts and smooth $n$-forms, the tangent and cotangent spaces, as well as the Lebesgue measure on Riemannian manifolds, and how it can be used to define integrals and $L^1(M)$. For more information on these topics, see e.g.\@ \cite{AE} and \cite{Dieu}. In particular, the existence of a volume form $\d V$ on $M$ that corresponds locally to a $\mathcal C^\infty$-multiple of the Lebesgue measure on $\R^n$ is proven in Theorem 16.22.2 in \cite[p.~163]{Dieu}.
	
	For anything going beyond this `classical' theory, including practically any non-smooth object on the manifold, we follow the book \cite{GKOS}, which introduces, for a smooth manifold $M$, the space of (compactly supported) smooth $k$-forms $\Omega^k(M)$ ($\Omega_c^k(M)$), and then defines the \emph{distributional $k$-forms} $\Omega^k(M)^\diamond$ as the dual of $\Omega^{n-k}_c(M)$ (where $n$ is the dimension of $M$), with regular objects embedding from $\Omega^k(M)$ into $\Omega^k(M)^\diamond$ via integration:
	\begin{align}
		\forall \omega\in\Omega^k(M)\colon \langle \omega, \tau\rangle_{(\Omega^k(M)^\diamond,\Omega^{n-k}_c(M))} := \int_M\omega\wedge\tau.\label{eq:dual-pairing}
	\end{align}
	For example, one can see that for $0$-forms, while $\Omega^0(M)=\mathcal C^\infty(M)$, $L^1_{\mathrm{loc}}(M)\subseteq\Omega^0(M)^\diamond$, since those are exactly the functions which are integrable on compact subsets of $M$.
	
	It turns out that these spaces of distributional $k$-forms are well-behaved in a couple of key ways, namely $\overline{\Omega^k(M)}^{wsc}=\Omega^k(M)^\diamond$ with respect to the dual pairing (\ref{eq:dual-pairing}), and the exterior derivative $\d$ and the Lie derivative $L_\phi$ (for a smooth vector field $\phi$) have unique continuous extensions from the spaces of smooth forms to their respective distributional forms which preserve all their most crucial properties \cite[Thm.~3.1.18, 3.1.23, 3.1.24]{GKOS}.
	
	Importantly, observe that $\Lip_0(M)\subset\Omega^0(M)^\diamond$ since Lipschitz functions are locally integrable.
	
	Lastly, the following lemma follows easily from results in the literature:
	\begin{lemma}\label{lem:distribution-is-abs-cont-function}
		If a distribution $h\in\mathcal D'(M)$ defined on an $n$-dimensional Riemannian manifold $M$ has all its partial derivatives $\partial_j h$ lie in $L^p_{loc}(M)$ for some $n<p<\infty$ and $\d h$ is (a regular distribution induced by) a $1$-form, then $h$ is an absolutely continuous function.
	\end{lemma}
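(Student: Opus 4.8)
The plan is to recognize this as a purely local regularity statement and reduce it, via smooth charts, to the Euclidean Sobolev embedding theorem of Morrey type. The role of the hypothesis $p>n$ is precisely to land in the range where a Sobolev gradient bound forces continuity, so Morrey's inequality will be the analytic core.

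First I would localize. Absolute continuity, membership in $L^p_{loc}$, and the property that $\d h$ is a regular $1$-form are all local conditions, and they are invariant under the smooth transition maps of $M$: on any compact set a transition diffeomorphism and its inverse have bounded derivatives, and by the cited Theorem 16.22.2 of \cite{Dieu} the Riemannian volume is a smooth positive multiple of Lebesgue measure in coordinates, so $L^p$ with respect to $\d V$ and $L^p$ with respect to Lebesgue measure coincide on relatively compact chart domains. Hence it suffices to prove the statement when $M$ is an open set $\Omega\subseteq\R^n$ and $h$ is a distribution on $\Omega$. Next I would identify the function space: the assumption that $\d h$ is represented by a $1$-form whose components are the $\partial_j h$ means exactly that the distributional gradient of $h$ is the vector field $(\partial_1 h,\dots,\partial_n h)$, and the additional hypothesis $\partial_j h\in L^p_{loc}$ places that gradient in $L^p_{loc}$.

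I would then use the standard fact that a distribution whose gradient lies in $L^p_{loc}$ is itself an element of $W^{1,p}_{loc}(\Omega)$; this is recovered by mollifying $h$, applying the local Poincaré inequality to control the mollifications modulo constants, and observing that the remaining constants converge because $h_\e\to h$ in $\mathcal D'$. In particular $h$ is (represented by) an $L^p_{loc}$ function. With $p>n$, Morrey's inequality gives the continuous embedding $W^{1,p}_{loc}(\Omega)\hookrightarrow C^{0,1-n/p}_{loc}(\Omega)$, so $h$ admits a locally $(1-n/p)$-Hölder continuous representative $\tilde h$. Because the chart-local representatives agree almost everywhere on overlaps and are continuous, they glue to a single continuous function on $M$. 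The absolute continuity then follows from the ACL characterization of Sobolev functions: $\tilde h$ is absolutely continuous on almost every coordinate segment and recovers itself there from $\partial_j h$ by the fundamental theorem of calculus; pulling this back through the charts shows $\tilde h$ is absolutely continuous along rectifiable curves on $M$.

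I expect the main obstacle to be the bookkeeping that transfers the distributional hypotheses cleanly through charts — in particular verifying that ``gradient in $L^p_{loc}$'' is genuinely chart-independent (so that the Euclidean reduction is legitimate) and that the chart-local continuous representatives are mutually consistent — rather than the analytic heart of the argument, which is the classical Morrey estimate and the ACL property of Sobolev functions.
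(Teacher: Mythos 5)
Your proof is correct and takes essentially the same route as the paper, which obtains the two key steps by citation rather than proof: H\"ormander's Theorem 4.5.12 (the Morrey-type statement that $\partial_j h\in L^p_{loc}$ with $p>n$ makes $h$ a H\"older-continuous function on open subsets of $\R^n$) and Schwartz's Th\'eor\`eme XVIII (first-order derivatives being functions gives absolute continuity) are exactly the facts you establish by hand via the $W^{1,p}_{loc}$ reduction, Morrey's embedding, and the ACL characterization. The only point to temper is your closing claim of absolute continuity along \emph{every} rectifiable curve: the ACL property yields this only on almost every coordinate line, which is also all that the paper's notion (and its later use, deducing Lipschitzness from the $L^\infty$ bound on the derivative plus continuity) requires.
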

	
	This follows from a combination of two theorems, firstly Theorem 4.5.12 in Hörmander \cite[p.~123]{Hoer} which states that if $X\subset\R^n$ open and $\partial_j h\in L^p_{loc}(X)$ for $p>n$ and all $1\leq j\leq n$, then $h$ is induced by a (even Hölder-continuous) function. And secondly, according to Schwartz in \cite[Thm.~XVIII]{Schwartz}, if all derivatives of rank $\leq1$ of $h$ are functions, then $h$ is an absolutely continuous function.
	
	\section{Preliminaries}
	\subsection{Derivative of Lipschitz Functions}
	For the remainder of this paper, let $M$ be a connected orientable $n$-dimensional Riemannian manifold without boundary, with volume form $\d V$. Note that due to a result in appendix A of \cite{Spivak}, this in particular implies that $M$ is second-countable (since there is only one connected component and therefore the Riemannian metric tensor induces a `genuine' metric space, with no points of infinite distance).
	
	In order to connect the space of Lipschitz functions to the space of essentially bounded functions $L^\infty$, if we are in $\R^n$, we have Rademacher's theorem, ensuring almost-everywhere differentiability. On a Riemannian manifold $M$, it is not too difficult to achieve the same result, and while said result is widely known, it is included for the sake of completeness.
	
	\begin{proposition}\label{prop:Elementary_Calculus}
		Let $F\colon M\to \R$ be an $L$-Lipschitz function. Then the following hold:
		\begin{enumerate}[i)]
			\item For almost all $x\in M$, the differential $\d F(x)\in T_x^*M$ of $F$ exists and satisfies $\|\d F(x)\|\leq L$.\label{list:Elementary_Calculus1}
			\item The mapping $\d F\colon M\to T^*M\colon x\mapsto (x,\d F(x))$ is well-defined as an $L^\infty(M,T^*M)$ section.
		\end{enumerate}
	\end{proposition}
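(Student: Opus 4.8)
The plan is to reduce everything to the classical Euclidean Rademacher theorem via charts. Since $M$ is second-countable (as recorded above), I would first fix a countable atlas $\{(U_k,\varphi_k)\}$ of smooth charts whose domains have relatively compact closure inside the domain of some larger chart. On the compact closure of each $U_k$ the metric tensor $g$ is smooth and positive-definite, hence uniformly comparable to the Euclidean inner product; estimating the Riemannian length of the coordinate-straight segment joining two points, this yields a constant $C_k$ with $d(p,q)\leq C_k\,|\varphi_k(p)-\varphi_k(q)|$ for $p,q\in U_k$. Consequently $F\circ\varphi_k^{-1}$ is Lipschitz on $\varphi_k(U_k)$ with respect to the Euclidean metric, with constant $LC_k$, and the classical Rademacher theorem produces a Lebesgue-null set $N_k\subset\varphi_k(U_k)$ off which $F\circ\varphi_k^{-1}$ is differentiable.

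Next I would transfer these null sets back to $M$. Because transition maps between smooth charts are diffeomorphisms, and in particular locally Lipschitz (so they map null sets to null sets), and because the volume form $\d V$ is locally a smooth strictly positive multiple of Lebesgue measure, the notion of a $\d V$-null set is chart-independent and each $\varphi_k^{-1}(N_k)$ is $\d V$-null. Setting $N:=\bigcup_k\varphi_k^{-1}(N_k)$, which is null by countable subadditivity, $F$ is differentiable at every $x\in M\setminus N$; the resulting differential $\d F(x)\in T_x^*M$ is intrinsic (independent of the chart) by the chain rule on chart overlaps. This establishes the a.e.\ existence claim in \eqref{list:Elementary_Calculus1}.

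For the norm bound I would argue pointwise at a fixed $x\notin N$. Given a unit vector $v\in T_xM$, choose any $C^1$ curve $\gamma$ with $\gamma(0)=x$ and $\dot\gamma(0)=v$; then $\d F(x)(v)=\frac{d}{dt}\big|_{t=0}F(\gamma(t))$, while the Lipschitz estimate gives $|F(\gamma(t))-F(x)|\leq L\,d(\gamma(t),x)\leq L\int_0^t\|\dot\gamma(s)\|_g\,\d s$. Dividing by $t$ and letting $t\to0$ yields $|\d F(x)(v)|\leq L\|v\|_g=L$, so taking the supremum over unit $v$ gives $\|\d F(x)\|_{T_x^*M}\leq L$, completing \eqref{list:Elementary_Calculus1}.

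Finally, for the second assertion it remains to verify measurability and essential boundedness of the section $x\mapsto(x,\d F(x))$. In each chart the component functions $\partial_j(F\circ\varphi_k^{-1})$ are pointwise a.e.\ limits of continuous difference quotients, hence Lebesgue measurable; pulling back and patching over the countable atlas shows $\d F$ is a measurable section of $T^*M$, and essential boundedness is immediate from $\|\d F(x)\|\leq L$, so $\d F\in L^\infty(M,T^*M)$. The only genuinely delicate point is the measure-theoretic bookkeeping across charts—ensuring that null sets and measurability are preserved under the transition maps and that $\d F$ is a bona fide global section rather than merely a chart-wise collection of objects—but this is handled by the smoothness of the transition maps and the local comparison of $\d V$ with Lebesgue measure quoted above.
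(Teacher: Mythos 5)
Your proposal is correct and follows essentially the same route as the paper's proof: reduce to the classical Euclidean Rademacher theorem via a countable atlas with Lipschitz-type control on the charts (the paper shrinks chart domains to make $\phi_m^{-1}$ Lipschitz, you use compactness of chart closures to compare $g$ with the Euclidean metric), patch the null sets, and then obtain the bound $\|\d F(x)\|\leq L$ by differentiating $F$ along curves through $x$ and using the Lipschitz estimate against arc length. Your treatment of measurability in part (ii) is in fact slightly more explicit than the paper's, which simply asserts well-definedness off the null set.
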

	
	\begin{proof}
		\begin{enumerate}[i)]
			\item Let $(\phi_m,U_m)_{m\in\N}$ be a countable locally finite atlas of $M$. Let furthermore $\phi_m^{-1}$ be Lipschitz (since they are smooth, this can be achieved by simply shrinking the domains $U_m$ by an arbitrarily small amount, in such a way that they still overlap).
			
			Then, $F\circ \phi_m^{-1}$ will be a Lipschitz map from $V_m:=\phi_m(U_m)\subset\R^n$ to $\R$, and thus the classical Rademacher theorem yields that $F\circ \phi_m^{-1}$ is a.e.\@ differentiable.
			
			So, as the composition of an a.e.\@ differentiable and a smooth function, $F|_{U_m}=(F\circ\phi_m^{-1})\circ\phi_m$ will also be a.e.\@ differentiable. And clearly, the values on the different sets $U_m$ are compatible, since $F|_{U_k}(x)=F|_{U_m}(x)$ for all $x\in U_k\cap U_m$.
			
			Hence $F$ is differentiable on at least all points that do not fall into any of the countably many null sets on which the restrictions to the sets $U_m$ are non-differentiable, in other words, $F$ is a.e.\@ differentiable itself.
			
			Thus, by the explanation following Theorem 3.1.23 in \cite{GKOS}, the distributional derivative $\d F$ defined in said theorem coincides a.e.\@ with the (a.e.)\@ classical derivative of $F$ found via Rademacher.
			
			This will be used to show $\|\d F(x)\|\leq L$ in all points of differentiability $x$ of $F$ since, for any such point $x$, we now know that we can find $\|\d F(x)\|$ classically in a variety of ways, for example by looking at the set $\Gamma(x)$ of all piecewise continuously differentiable, piecewise unit-speed (thus $1$-Lipschitz) curves on $M$ starting in $x$, and calculating
			\begin{align*}
				\|\d F(x)\|=\sup_{\gamma\in \Gamma(x)}|(F\circ\gamma)'(0)|=\sup_{\gamma\in \Gamma(x)}\lim_{t\searrow0}\frac{|F(\gamma(t))-F(\gamma(0))|}{t}\leq \sup_{\gamma\in \Gamma(x)}\lim_{t\searrow0}L=L.
			\end{align*}
			
			\item The set of non-differentiability points is of measure zero, and for all $x\in M$ outside of that null set, $\d F(x)\in T_x^*M$, so $\d F$ is well-defined.\qedhere
		\end{enumerate}
	\end{proof}
	
	\subsection{Essentially Bounded Sections}
	Let $M$ be a Riemannian manifold and $E$ a vector bundle over $M$ with projection $\pi$. Then, any measurable right-inverse of $\pi$, i.e.\@ any measurable function $f\colon M\to E$ such that $\pi\circ f=\mathrm{id}_M$, is called a \emph{section} of $E$.
	
	Smooth $1$-forms are by definition smooth sections of the cotangent bundle, which we denote by $\Gamma(M,T^*M)$ (following \cite{GKOS}). On the other hand, using the Lebesgue measure on $M$, we can define the space $L^\infty(M,T^*M)$ of \emph{essentially bounded sections} of the cotangent bundle (not necessarily smooth ones) as the Banach space of all sections which have essentially bounded supremum:
	\begin{align*}
		\|f\|_{L^\infty}:=\inf_{N\subset M \text{ null set}}\sup_{x\in M\setminus N}\|f(x)_2\|_2<\infty.
	\end{align*}
	Here, $f(x)_2\in T_x^*M\cong\R^n$ denotes the second component of $f(x)=(x,f(x)_2)$.
	It follows that $L^\infty(M,T^*M)\subset\Omega^1(M)^\diamond$.

	\subsection{Integrable Sections}
	We will now define the Banach space $L^1(M,TM)$, the space of equivalence classes of sections that are integrable with respect to the canonical volume form $\d V$ of $M$, and then show that this is indeed a predual of $L^\infty(M,T^*M)$.
	
	To do that, we will however first need to define:
	\begin{definition}
		A sequence of pairs $(V_m,\phi_m)_{m\in\N}$, where for every $m\in\N$, $V_m\subseteq M$ is open and $\phi_m\colon V_m\to\R^n$ is a chart, is called a \emph{patchwork of trivializations} if for every $V_m$ there exists an injective tangent space trivialization $\Phi_m\colon TV_m\to\R^n$ and, in addition, $M=\bigcup_m\overline{V_m}$, the domains $V_m$ are all pairwise disjoint and $N:=M\setminus\bigcup_mV_m$ is a null set.
	\end{definition}
	
	\begin{lemma}
		A patchwork of trivialisations exists on any separable smooth manifold $M$.
	\end{lemma}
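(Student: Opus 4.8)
The plan is to reduce the statement to the existence of a suitable countable, locally finite cover of $M$ by coordinate cubes, and then to \emph{disjointify} this cover by a greedy subtraction, using local finiteness to recover the closure condition $M=\bigcup_m\overline{V_m}$.

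First I would fix the auxiliary cover. Since $M$ is a separable (hence, being a manifold, second-countable and paracompact) smooth manifold, the sets of the form $\psi^{-1}(Q)$, where $\psi\colon U\to\R^n$ is a chart and $Q\subseteq\R^n$ is an open cube with $\overline Q\subseteq\psi(U)$, constitute a basis of the topology. Refining an arbitrary atlas, one obtains a countable, locally finite family of such \emph{coordinate cubes} $(D_k)_{k\in\N}$, $D_k=\psi_k^{-1}(Q_k)$, that still covers $M$; since $\overline{Q_k}$ is compact in $\psi_k(U_k)$, one has $\overline{D_k}=\psi_k^{-1}(\overline{Q_k})$ and $\partial D_k=\psi_k^{-1}(\partial Q_k)$. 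The point of using cubes (or balls) rather than arbitrary chart domains is that $\partial Q_k$ is a finite union of pieces of coordinate hyperplanes and hence Lebesgue-null, so that each $\partial D_k$, and therefore $\bigcup_k\partial D_k$, is a $\d V$-null set.

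Next I would disjointify. Setting $V_k:=D_k\setminus\bigcup_{j<k}\overline{D_j}$ (and discarding the indices for which this is empty), each $V_k$ is open and contained in the chart domain $U_k$, so $\phi_k:=\psi_k\cut_{V_k}$ is a chart and the chart differential restricts to the required trivialisation $\Phi_k$ of $TV_k\cong V_k\times\R^n$. The sets are pairwise disjoint: for $j<k$ we have $V_k\subseteq M\setminus\overline{D_j}$ while $V_j\subseteq D_j$. Moreover, if $p\notin\bigcup_j\partial D_j$ and $k$ is the least index with $p\in D_k$, then $p\notin\overline{D_j}$ for every $j<k$, whence $p\in V_k$; thus $M\setminus\bigcup_k V_k\subseteq\bigcup_j\partial D_j$ is $\d V$-null, giving the required null-set condition.

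The step I expect to be the main obstacle is the closure condition $M=\bigcup_k\overline{V_k}$, which is exactly where a naive Whitney-type decomposition of each piece fails: its cubes shrink towards the boundary and accumulate, so a seam point need not lie in any \emph{single} closure. This is resolved by local finiteness. Since $V_k\subseteq D_k$ and $(D_k)$ is locally finite, so is $(V_k)$, and for a locally finite family one has $\bigcup_k\overline{V_k}=\overline{\bigcup_k V_k}$. As $\bigcup_k V_k$ has a null, hence empty-interior, complement, it is dense, so $\overline{\bigcup_k V_k}=M$ and therefore $\bigcup_k\overline{V_k}=M$. Collecting the surviving pairs $(V_k,\phi_k)$ then yields the desired patchwork of trivialisations.
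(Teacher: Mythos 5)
Your proof is correct, and its skeleton matches the paper's: cover $M$ by countably many trivializing chart domains whose topological boundaries are $\d V$-null, disjointify greedily, and observe that the uncovered set lies inside the union of those boundaries. The genuine difference is in the disjointification step and, consequently, in how the closure condition $M=\bigcup_k\overline{V_k}$ is paid for. You subtract the \emph{original} sets, $V_k:=D_k\setminus\bigcup_{j<k}\overline{D_j}$, and therefore need two additional ingredients: a countable \emph{locally finite} refinement, and the fact that for a locally finite family the union of the closures equals the closure of the union. The paper subtracts the \emph{already constructed} pieces, $V_{m+1}:=U_{p_{m+1}}\setminus\bigcup_{k\leq m}\overline{V_k}$, which makes the closure condition automatic with no local finiteness at all: a point $p$ lying outside every $\overline{V_j}$ would belong to $V_k$ for the least $k$ with $p\in U_{p_k}$, a contradiction. (The paper never comments on this condition, so your explicit verification is a plus.) In fact, your construction also satisfies it without local finiteness: if $k$ is least with $p\in\overline{D_k}$, then some neighbourhood of $p$ avoids the finitely many $\overline{D_j}$ with $j<k$, hence on that neighbourhood $D_k\subseteq V_k$, and $p\in\overline{D_k}$ forces $p\in\overline{V_k}$. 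So the locally finite refinement is a convenience rather than a necessity; what it buys is replacing this ad hoc argument by a standard textbook fact. One caveat you share with the paper: passing from separability to second countability (in your version) or to $\sigma$-compactness (in the paper's) implicitly uses that $M$ is metrizable, which is automatic in the paper's connected Riemannian setting but fails for arbitrary separable manifolds (e.g.\@ the Pr\"ufer surface); neither proof is worse off here.
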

	
	\begin{proof}
		Start by defining a smooth atlas $(U_p,\phi_p)_{p\in M}$ where $U_p$ is an open ball around $p\in M$ such that there exists an injective tangent space trivialization $\Phi_p\colon TU_p\to\R^n$. There must exist some neighbourhood $U$ of $p$ such that $\Phi\colon TU\to\R^n$ is injective since we are in a smooth manifold.
		
		Then, by $\sigma$-compactness of $M$, there exists a countable subcover $(U_{p_m},\phi_{p_m})_{m\in\N}$.
		Then, we let:
		\begin{gather*}
			V_0:=U_{p_0}\quad\text{and}\quad V_{m+1} := U_{p_{m+1}}\setminus\bigcup_{k=0}^m\overline{V_k}.
		\end{gather*}
		We cut away the overlaps, or rather, their closures, so the resulting sets remain open.
		
		Note that $N$ is a subset of the countable union of spheres $\partial U_{p_m}$, each of which is Lebesgue-null. Thus, if we restrict $\phi_{p_m}$ to $V_m$ and call that new chart $\phi_m$, then $(V_m,\phi_m)_{m\in\N}$ is a patchwork of trivializations.
	\end{proof}
	
	Now, to get back to defining integrable sections: clearly, on each chart $\phi_m$, $f|_{V_m}\colon V_m\to TV_m$ is integrable iff $\Phi_m\circ f\circ\phi_m^{-1}\colon \phi_m(V_m)\subset\R^n\to\R^n$ is integrable with respect to a specific measure $\mu_m$ that depends on the metric tensor $g$ of $M$ and the chart $\phi_m$, and is absolutely continuous with respect to the Lebesgue measure. Similarly:
	\begin{align}
		\int_Mf\d V &= \int_{M\setminus N}f\d V=\sum_m\int_{V_m}f\d V\notag\\
		&= \sum_m\int_{\phi_m(V_m)} \Phi_m\circ f\circ \phi_m^{-1} \underset{=:\d\mu_m}{\underbrace{\sqrt{\left|\det\left[g\left(\frac{\partial\phi_m}{\partial x_i},\frac{\partial\phi_m}{\partial x_j}\right)\right]_{i,j=1}^n\right|}\d\lambda}}. \label{eq:volume-measure-Rn}
	\end{align}
	
	Now we will use that the regions $V_m$ are the domain of a single chart $\phi_m$ to define a specific orthonormal frame $(x_i)_{i=1}^n$ almost everywhere, on $M\setminus N$. We do this by first defining local sections $x_i^m\colon V_m\to TV_m$ such that $\langle x_i^m(x),x_j^m(x)\rangle_x=\delta_{ij}$ for all $x\in V_m$ and the scalar product $\langle\cdot,\cdot\rangle_x$ on $T_xM$.
	
	This is possible to do e.g.\@ by choosing the `usual' frame $\left(\frac{\partial\phi_m}{\partial x_i}\right)_{i=1}^n$ given by the smooth chart $\phi_m$, and then applying Gram-Schmidt to the resulting vectors in each $T_xM$. Since the pointwise scalar product of two $\mathcal C^\infty$ functions is again $\mathcal C^\infty$ and pointwise addition and scalar multiplication as well as the pointwise normalization of a nowhere-vanishing vector field likewise preserve smoothness, the ``Gram-Schmidt normalization operator'' $G$ maps $n$-tuples of smooth vector fields which are pointwise linearly independent, to $n$-tuples of smooth vector fields which pointwise form an orthonormal basis of their respective tangent space. Thus, $(x_i^m)_{i=1}^n:=G\left(\left(\frac{\partial\phi_m}{\partial x_i}\right)_{i=1}^n\right)$ is an orthonormal frame of $V_m$.
	
	Then, define $x_i\colon M\setminus N\to TM$ as $x_i|_{V_m}:=x_i^m$. Due to orthonormality, if $\tilde x_i$ denotes the cotangent vector dual to $x_i$, then $|\tilde x_1\wedge\dotsc\wedge\tilde x_n|=|\d V|$. W.l.o.g. choose the ordering of the $x_i$ such that this equality holds even without absolute values. Let $\chi\colon TM\to\R^n$ denote the `coordinate function' which maps $(x,\sum_{i=1}^n\alpha_i(x)x_i(x))\mapsto (\alpha_i(x))_{i=1}^n$ and which by orthonormality is an isometry on each $T_xM$.
	
	We are now ready to define \emph{integrability} of a section:
	
	\begin{definition}\label{def:integrable}
		A section $g\colon M\to TM$ is integrable with respect to the measure $\d V$ iff
		\begin{align*}
			\sum_m\int_{\phi_m(V_m)}\|\chi\circ g\circ\phi_m^{-1}\|\d\mu_m=:\int_M\|g\|\d V<\infty.
		\end{align*}
		
		Moreover, define $L^1(M,TM)$ to be the quotient space of integrable sections modulo $\d V$-almost-everywhere equivalence:
		\begin{align*}
			L^1(M,TM) &:= \{f\colon M\to TM\colon f\text{ integrable}\}/(f = g\ \ \d V\text{-a.e.\@}).
		\end{align*}
	\end{definition}
	
	We will of course want that this definition is independent of the choice of atlas.
	
	\begin{lemma}
		The notion of integrability introduced in Definition \ref{def:integrable} does not depend on the choice of $(V_m, \phi_m)$.
	\end{lemma}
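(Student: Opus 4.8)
The plan is to show that, whatever choices are made, the chartwise sum in Definition~\ref{def:integrable} always evaluates one and the same intrinsic quantity: the integral of the pointwise Riemannian norm $x\mapsto\|g(x)\|_x:=\sqrt{\langle g(x),g(x)\rangle_x}$ against the volume measure $\d V$. Since that quantity refers to no patchwork whatsoever, two patchworks must return equal values (both finite or both infinite), which is precisely the assertion.

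First I would exploit the two identifications built into the construction. By orthonormality of the frame $(x_i)$, the coordinate function $\chi$ restricts to a linear isometry $T_xM\to\R^n$ on almost every fibre, so for almost every $x$,
\[
\|(\chi\circ g\circ\phi_m^{-1})(\phi_m(x))\| = \|\chi(x,g(x))\| = \|g(x)\|_x.
\]
Note in passing that this value is independent of which orthonormal frame is used, as any two differ by a pointwise orthogonal transformation, under which the Euclidean norm is invariant. Next, the measure $\mu_m$ is by definition the coordinate representation of $\d V$ on $V_m$; this is exactly the change of variables displayed in \eqref{eq:volume-measure-Rn}, giving
\[
\int_{\phi_m(V_m)}(h\circ\phi_m^{-1})\,\d\mu_m = \int_{V_m} h\,\d V
\]
for every nonnegative measurable $h$ on $V_m$.

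Combining these two facts with $h(x)=\|g(x)\|_x$ and summing, while using that the $V_m$ are pairwise disjoint and $M\setminus\bigcup_m V_m=N$ is $\d V$-null, I obtain
\[
\sum_m\int_{\phi_m(V_m)}\|\chi\circ g\circ\phi_m^{-1}\|\,\d\mu_m = \sum_m\int_{V_m}\|g(x)\|_x\,\d V(x) = \int_M\|g(x)\|_x\,\d V(x).
\]
The right-hand side contains no reference to the patchwork, so running the identical computation for any second patchwork $(V_k',\phi_k')$ with its own frame, coordinate map and measures $\mu_k'$ yields the very same number, proving independence.

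I expect no genuine obstacle here, since the construction is engineered so that the chart data cancels: $\chi$ absorbs the fibre metric into the Euclidean norm, while $\mu_m$ absorbs the base metric into $\d V$, leaving only the intrinsic integral. The sole points deserving a line of justification are measurability of $x\mapsto\|g(x)\|_x$, which follows from $g$ being a measurable section together with smoothness of the metric tensor, and the fact that the frame $(x_i)$ and hence $\chi$ are defined only on $M\setminus N$; but as $N$ is null, this affects neither side of the computation.
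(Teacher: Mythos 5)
Your proof is correct, but it takes a genuinely different route from the paper. You reduce everything to the intrinsic quantity $\int_M\|g(x)\|_x\,\d V(x)$: the fibrewise isometry property of $\chi$ turns the integrand into the pointwise Riemannian norm (independent of the chosen orthonormal frame, as any two differ by pointwise orthogonal maps), and the identification of $\mu_m$ as the coordinate expression of the Riemannian measure turns each chart integral into $\int_{V_m}\|g\|_x\,\d V$; disjointness of the $V_m$ plus nullity of the complement then gives a patchwork-free value. The paper instead compares two patchworks directly on each intersection $D=V_m\cap X_\alpha$: it approximates the scalar function $h=\|\cdot\|\circ\chi\circ g$ in $L^1$ by smooth functions $h_k$, uses that $h_k\,\d V$ is a classical $n$-form (whose integral transforms correctly under change of charts), passes to the limit by dominated convergence, and finally observes that the two coordinate maps $\chi$ and $\xi$ induce the same fibrewise norm. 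The trade-off: your argument is shorter and conceptually cleaner, but it leans on the classical fact that the Riemannian volume measure is a well-defined chart-independent Borel measure with local density $\sqrt{\det[g_{ij}]}$ --- this is among the paper's stated prerequisites (Dieudonn\'e, Thm.\ 16.22.2, and the assumed theory of $L^1(M,\R)$), so there is no circularity, but you should cite it explicitly, since otherwise the equality $\int_{\phi_m(V_m)}(h\circ\phi_m^{-1})\,\d\mu_m=\int_{V_m}h\,\d V$ for merely measurable $h$ is exactly the kind of chart-independence statement one is trying to establish. The paper's smooth-approximation detour effectively re-proves that instance of change of variables from the transformation law for smooth $n$-forms, making it self-contained relative to the smooth theory of forms rather than the measure theory on manifolds.
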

	
	\begin{proof}
		Let $(X_\alpha,\psi_\alpha)$ be another patchwork of trivializations, let $(y_i)_{i=1}^n$ be the resulting orthonormal frame on their union, and let $\xi$ be the coordinate function analogous to $\chi$.
	
		It is sufficient to show equality on an intersection $D:=V_m\cap X_\alpha$. However, on such an intersection, we can indeed show that
		\begin{align*}
			\int_{\phi_m(D)}\|\chi\circ f\circ\phi_m^{-1}\|\sqrt{\det [g\left(x_i,x_j\right)]_{i,j}}\d\lambda = \int_{\psi_\alpha(D)}\|\chi\circ f\circ\psi_\alpha^{-1}\|\sqrt{\det [g\left(y_i,y_j\right)]_{i,j}}\d\lambda,
		\end{align*}
		because $h:=\|\cdot\|\circ\chi\circ f\colon M\to [0,\infty)$ is arbitrarily well approximable in $L^1$-norm by $\mathcal C^\infty(M)$ functions $h_k$ (since $h\circ\phi_m^{-1}$ is, and $\phi_m$ is a diffeomorphism). Note that this pertains to density of smooth functions within the space $L^1(M, \R)$, not $L^1(M,TM)$, and is thus a classical result.
		
		But for $h_k$ we know that $h_k\d V$ is simply some other classical $n$-form $\omega$, and clearly $n$-forms satisfy the stated transformation behaviour:
		\begin{align*}
			\int_{\phi_m(D)}h_k\circ\phi_m^{-1}\sqrt{\det[g(x_i,x_j)]_{i,j}}\d\lambda = \int_D\omega = \int_{\psi_\alpha(D)}h_k\circ\psi_\alpha^{-1}\sqrt{\det[g(y_i,y_j)]_{i,j}}\d\lambda.
		\end{align*}
		It follows that the equality also holds for the limit as $k\to\infty$, and that the limit can be brought inside the integrals by dominated convergence.
		
		Lastly, since both $\chi|_{\{x\}\times T_xM}$ and $\xi|_{\{x\}\times T_xM}$ are bijective isometries between $\R^n$ and $\{x\}\times T_xM$ with the same base point (both map $x$ to $0$), $\|\cdot\|\circ\chi$ and $\|\cdot\|\circ\xi$ agree on each $\{x\}\times T_xM$, and thus they agree on $TM$, finishing the proof.
	\end{proof}
	
	\begin{proposition}\label{prop:L1-Linfty-duality}
		The dual of the Banach space $L^1(M,TM)$ is $L^\infty(M,T^*M)$, and
		\begin{align*}
			\langle f,g\rangle_{(L^\infty,L^1)} := \int_M\langle f,g\rangle\d V
		\end{align*}
		defines a valid dual pairing on this pair of spaces.
	\end{proposition}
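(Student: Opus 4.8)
The plan is to show $L^\infty(M,T^*M)$ is isometrically isomorphic to the dual of $L^1(M,TM)$ by reducing the whole problem, via the patchwork of trivializations, to the classical $\R^n$ duality $L^1(\R^n)^* = L^\infty(\R^n)$ applied component-wise. First I would observe that the coordinate maps $\chi$ and its cotangent analogue (call it $\chi^*$, the dual frame coordinates) render the pairing $\langle f,g\rangle\, \d V$ pointwise computable: since the frame $(x_i)$ is orthonormal, for $g$ with coordinates $\chi\circ g = (\beta_i)_{i=1}^n$ and $f$ with dual coordinates $(\alpha_i)_{i=1}^n$, one has $\langle f,g\rangle = \sum_i \alpha_i\beta_i$ at almost every point. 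Transported through each chart $\phi_m$ against the measure $\mu_m$ from (\ref{eq:volume-measure-Rn}), this identifies the global pairing with a sum of ordinary integrals $\sum_m \int_{\phi_m(V_m)} \sum_i \alpha_i\beta_i \,\d\mu_m$ of scalar functions on subsets of $\R^n$.

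Next I would verify that the pairing is well defined and bounded, giving the easy inclusion $L^\infty(M,T^*M) \hookrightarrow L^1(M,TM)^*$. Boundedness is H\"older: $|\langle f,g\rangle| \leq \int_M \|f\|\,\|g\|\,\d V \leq \|f\|_{L^\infty}\int_M\|g\|\,\d V = \|f\|_{L^\infty}\|g\|_{L^1}$, using the pointwise Cauchy–Schwarz estimate $|\langle f(x),g(x)\rangle| \leq \|f(x)_2\|_2\,\|\chi\circ g(x)\|$ valid on each tangent space. This shows the induced functional has operator norm at most $\|f\|_{L^\infty}$. For the reverse norm inequality I would, on a set where $\|f(x)_2\|_2$ is close to $\|f\|_{L^\infty}$ (of positive measure, by definition of the essential supremum), build a test section $g$ aligned with $f$ pointwise and supported there, showing the functional norm is at least $\|f\|_{L^\infty} - \e$ for every $\e$; measurable selection of the aligning unit vectors is routine since the frame gives a global measurable trivialization.

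The substantive step is surjectivity: every $\Lambda \in L^1(M,TM)^*$ arises from some $f \in L^\infty(M,T^*M)$. Here I would exploit the patchwork structure. On each patch $V_m$, the coordinate isometry $\chi$ together with $\phi_m$ identifies $L^1(V_m,TV_m)$ with $L^1(\phi_m(V_m),\R^n;\mu_m) \cong \bigoplus_{i=1}^n L^1(\phi_m(V_m),\mu_m)$, whose dual is the corresponding $L^\infty$ sum by the classical scalar duality (the measure $\mu_m$ is $\sigma$-finite and absolutely continuous w.r.t.\ Lebesgue, so the standard theorem applies). Restricting $\Lambda$ to sections supported in $V_m$ thus produces an $L^\infty$ cotangent field $f_m$ on $V_m$ with $\|f_m\|_\infty \leq \|\Lambda\|$; since the $V_m$ are pairwise disjoint and cover $M$ up to the null set $N$, these glue to a global measurable section $f$ with $\|f\|_{L^\infty} \leq \|\Lambda\|$. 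Finally I would confirm that the functional induced by $f$ agrees with $\Lambda$ on all of $L^1(M,TM)$, not merely on patch-supported sections: this follows because finite sums of patch-supported sections are dense in $L^1(M,TM)$ (the defining series $\sum_m \int_{V_m}\|g\|\,\d V$ converges, so truncations approximate $g$ in norm) and both functionals are continuous.

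The main obstacle I anticipate is the gluing and density bookkeeping rather than any single hard estimate: one must be careful that the local dualities are compatible across patches so that the pieces $f_m$ genuinely assemble into one bounded section with the correct global norm, and that the null set $N$ and the overlaps of closures contribute nothing. The orthonormality of the global frame $(x_i)$, which makes $\chi$ a fiberwise isometry, is exactly what keeps the norms coherent, so I would lean on that throughout to avoid metric-tensor factors polluting the $L^\infty$ bound.
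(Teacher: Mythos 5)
Your proposal is correct and follows essentially the same route as the paper: both reduce to the classical $L^1$--$L^\infty$ duality on each patch $\phi_m(V_m)$ with the measures $\mu_m$, and glue using the disjointness of the patchwork. The paper merely compresses your explicit steps (bounded inclusion, norm attainment, surjectivity via restriction to patches, and density of finitely-supported sections) into the abstract fact that the dual of an $\ell^1$-sum $\bigoplus_m^{\ell^1}L^1(\phi_m(V_m),\R^n)$ is the $\ell^\infty$-sum of the duals.
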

	
	\begin{proof}
		Via the isometry $L^1(M,TM)\equiv \bigoplus_{m\in\N}^{\ell^1}L^1(\phi_m(V_m),\R^n)$ (where the measure on $\phi_m(V_m)$ is $\mu_m$ from Equation \ref{eq:volume-measure-Rn}), we get that $L^1(M,TM)^*\equiv\bigoplus_{m\in\N}^{\ell^\infty}L^\infty(\phi_m(V_m),\R^n)$.
		
		However, clearly $L^\infty(\phi_m(V_m),\R^n)$ is isometric to $L^\infty(V_m,T^*M)$ (no matter the measure, as long as $\phi_m$ maps null sets to null sets), and
		\begin{align*}
			\sup_{m\in\N}\essup_{x\in V_m}\|f(x)\|_{T_x^*M}=\essup_{x\in M}\|f(x)\|_{T_x^*M},
		\end{align*}
		i.e.\@ $L^1(M,TM)^*\equiv L^\infty(M,T^*M)$.
	\end{proof}
	
	\begin{lemma} \label{lem:test-functions-dense}
		The compactly supported sections are dense in $L^1(M,TM)$.
	\end{lemma}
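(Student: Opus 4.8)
The plan is to reduce everything to the isometric identification $L^1(M,TM)\equiv\bigoplus_{m\in\N}^{\ell^1}L^1(\phi_m(V_m),\R^n)$ established in the proof of Proposition~\ref{prop:L1-Linfty-duality}, under which the two genuinely distinct features of the problem—the infinitely many patches and the possibly non-compact extent within a single patch—decouple. Density of the compactly supported sections then follows from two classical facts: an element of an $\ell^1$-sum is approximated in norm by its finite truncations, and compactly supported functions are dense in each Euclidean factor $L^1(\phi_m(V_m),\R^n)$. The only real work is bookkeeping: transporting ``compactly supported inside the open image $\phi_m(V_m)$'' back to ``compactly supported as a subset of $M$.''

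Concretely, fix $f\in L^1(M,TM)$ and $\e>0$ and write $f=(f_m)_{m\in\N}$ with $\sum_m\|f_m\|_{L^1(\mu_m)}=\|f\|_{L^1(M,TM)}<\infty$. First I would truncate the index: choose $K$ with $\sum_{m>K}\|f_m\|_{L^1(\mu_m)}<\e/2$ and discard the summands with $m>K$. Next, on each of the finitely many remaining factors I would set $\tilde f_m:=\mathbf 1_{K^{(m)}}f_m$ for a compact set $K^{(m)}\subseteq\phi_m(V_m)$ large enough (along an exhaustion of the open set $\phi_m(V_m)$ by compacta) that $\|f_m-\tilde f_m\|_{L^1(\mu_m)}<\e/(2K)$; this is possible by dominated convergence, the measure $\mu_m$ being absolutely continuous with respect to Lebesgue measure (see \eqref{eq:volume-measure-Rn}). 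Reassembling, let $g$ be the section that equals $\tilde f_m$ on $V_m$ for $1\leq m\leq K$ and vanishes on the remaining $V_m$ and on the null set $N$; then $\|f-g\|_{L^1(M,TM)}<\e$ by construction.

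It remains---and this is the step requiring the slightly delicate argument promised above---to verify that $g$ genuinely has compact support in $M$. For each $m\leq K$ the set $C_m:=\phi_m^{-1}(\supp\tilde f_m)$ is the image of a compact subset of $\phi_m(V_m)$ under the homeomorphism $\phi_m^{-1}$, hence a compact subset of $M$ contained in $V_m$; since $\{x\in M\colon g(x)\neq0\}\subseteq\bigcup_{m=1}^K C_m$, which is a finite union of compact sets and therefore itself compact and closed, we conclude that $\supp g\subseteq\bigcup_{m=1}^K C_m$ is compact. I expect this to be the main (if modest) obstacle: merely restricting $f$ to finitely many charts would not suffice, because the support could then accumulate on the chart boundaries $\partial V_m$ and fail to be compact in $M$; multiplying by the indicator of a compact set strictly inside each $\phi_m(V_m)$ is exactly what rules this out.
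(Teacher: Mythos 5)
Your proof is correct, but it takes a genuinely different route from the paper. The paper never touches the patchwork of trivializations here: it builds a locally finite smooth partition of unity $(\rho_k)_{k\in\N}$ whose members are supported in compact sets $\overline{W_m}=\overline{\phi_m^{-1}(B_2(0))}$, and approximates $g$ by the products $g\bigl(\sum_{i=1}^k\rho_i\bigr)$, with the tail estimate $\int_{\bigcup_{i>k}W_i}\|g\|\,\d V\to0$ doing the work. You instead exploit the isometry $L^1(M,TM)\equiv\bigoplus_{m\in\N}^{\ell^1}L^1(\phi_m(V_m),\R^n)$ from Proposition~\ref{prop:L1-Linfty-duality}: truncate the $\ell^1$-index, then truncate each remaining factor by the indicator of a compact set inside the open image $\phi_m(V_m)$, and check that the reassembled section has compact support in $M$. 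Both arguments reduce to dominated convergence plus countability of the chart family, and your worry about support accumulating on $\partial V_m$ is well placed, since the closures $\overline{V_m}$ need not be compact (the paper's construction of the patchwork gives no such guarantee); your inner compact truncation is exactly what the paper's compactly supported bumps $\rho_k$ achieve by different means. What each approach buys: yours is more elementary and leans only on measure theory once the direct-sum identification is granted (which the paper only sketches inside the proof of Proposition~\ref{prop:L1-Linfty-duality}); the paper's cut-off functions are smooth, which makes the same device reusable later (a variant of it reappears in the proof of Proposition~\ref{prop:complete-equality}), whereas your indicator truncations are not smooth --- harmless for this lemma, since smoothing is delegated to Lemma~\ref{lem:smooth-compact}. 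One cosmetic point: the absolute continuity of $\mu_m$ with respect to Lebesgue measure is not actually needed for your exhaustion step; dominated convergence applies to any $\sigma$-finite Borel measure there.
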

	
	\begin{proof}
		It is standard that all (locally compact) manifolds that are both Hausdorff and second-countable admit $\mathcal C^\infty$ partitions of unity. A proof of this fact can be found e.g.\@ as Corollary 3.4 in \cite[p.~33]{Lang}.
		
		Let $(\rho_k)_{k\in\N}$ be such a partition, constructed almost analogously to said Corollary 3.4, i.e.\@ we take the locally finite atlas $(\phi_m,U_m)_{m\in\N}$ and sets $V_m\subset U_m$ such that $\phi_m(U_m) = B_3(0)\subset\R^n$ and $\phi_m(V_m) = B_1(0)$, while the $V_m$ still cover $M$, i.e. $\bigcup_{m\in\N}V_m=M$. These sets and covering exist by the preceding Theorem 3.3 in \cite{Lang}.
		
		Thus, we can slightly modify the construction of $(\psi_k)_{k\in\N}$, such that $\psi_k$ is still constant $1$ on $V_m$, but now also constant $0$ outside of $W_m:=\phi_m^{-1}(B_2(0))$ (rather than outside the preimage of $B_3(0)$ as in the Corollary) and taking values in $[0,1]$ globally.
		
		Then we can clearly proceed the same way, by summing $\psi(x):=\sum_{k\in\N}\psi_k(x)$ (this sum is finite for each $x\in M$ due to local finiteness of our atlas), yielding a slightly modified locally finite partition of unity $(\rho_k)_{k\in\N}:=\left(\psi_k/\psi\right)_{k\in\N}$, now with the additional property that all the $\rho_k$ are compactly supported, as all the $\overline{W_m}$ are compact (they are the images of compact sets under a continuous mapping).
		
		But now we can simply approximate any function $g\in L^1(M,TM)$ by the sequence of compactly supported $L^1$-sections $\left(g\left(\sum_{i=1}^k\rho_i\right)\right)_{k\in\N}$, as the partition of unity gives us that
		\begin{align*}
			\int_M \left\|g-g\left(\sum_{i=1}^k\rho_i\right)\right\|\d V
			&= \int_M \left\|g\left(\sum_{i\in\N}\rho_i-\sum_{i=1}^k\rho_i\right)\right\|\d V = \int_M \left\|g\left(\sum_{i>k}\rho_i\right)\right\|\d V\\
			&=\int_{\bigcup_{i>k}W_i}\left\|g\left(\sum_{i>k}\rho_i\right)\right\|\d V \leq \int_{\bigcup_{i>k}W_i}\|g\|\d V\overset{k\to\infty}\longrightarrow0.\qedhere
		\end{align*}
	\end{proof}
	
	\begin{lemma}\label{lem:smooth-compact}
		The smooth compactly supported sections are dense in the compactly supported $L^1(M,TM)$ sections.
	\end{lemma}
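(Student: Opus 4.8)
The plan is to reduce to a single chart, mollify in local coordinates, and transfer back. First, given a compactly supported section $g\in L^1(M,TM)$ with support $K$, I would take the smooth locally finite partition of unity $(\rho_k)_{k\in\N}$ from the proof of Lemma~\ref{lem:test-functions-dense}, where each $\rho_k$ is supported in some $\overline{W_m}\subset U_m$. Since $K$ is compact and the atlas is locally finite, only finitely many $\rho_k$ fail to vanish on $K$, so $g=\sum_{k\in F}\rho_k g$ for a finite index set $F$, and it suffices to approximate each summand $\rho_k g$, which is a compactly supported $L^1$-section whose support lies in a single chart domain $U_m$.

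So fix such a summand, call it $g_0$, supported in a compact set $S\subset U_m$. Using the smooth tangent-space trivialization $\Phi_m$ and the chart $\phi_m$, I would form $\tilde g_0:=\Phi_m\circ g_0\circ\phi_m^{-1}$, a compactly supported $L^1$-function from $\phi_m(U_m)\subset\R^n$ into $\R^n$ (integrable against the smooth-density measure $\mu_m$, hence also Lebesgue-integrable on the compact set $\phi_m(S)$). Extending $\tilde g_0$ by zero and convolving with a standard mollifier $\eta_\e$ produces smooth functions $h_\e:=\tilde g_0\ast\eta_\e$ that are compactly supported; for $\e$ small enough their supports stay within a fixed compact neighbourhood of $\phi_m(S)$ contained in $\phi_m(U_m)$, and $h_\e\to\tilde g_0$ in $L^1(\R^n,\R^n)$. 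Pulling back via $s_\e:=\Phi_m^{-1}\circ h_\e\circ\phi_m$ and extending by zero yields genuinely smooth, compactly supported sections of $TM$, where smoothness is exactly the statement that $\Phi_m\circ s_\e\circ\phi_m^{-1}=h_\e$ is smooth.

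It remains to verify that $s_\e\to g_0$ in the $L^1(M,TM)$-norm, and this is where the two different fibre coordinate systems must be reconciled: smoothness is measured through the trivialization $\Phi_m$, whereas the norm of Definition~\ref{def:integrable} is computed through the orthonormal coordinate function $\chi$ and the density $\d\mu_m$. I would note that on the fibre over $x$, the Riemannian length of a tangent vector with $\Phi_m$-coordinates $v$ equals $\sqrt{v^{\mathsf T}G(\phi_m(x))\,v}$, where $G=[g(\partial\phi_m/\partial x_i,\partial\phi_m/\partial x_j)]_{i,j}$ is smooth and positive definite, while $\d\mu_m=\sqrt{\det G}\,\d\lambda$. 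Over the fixed compact neighbourhood of $\phi_m(S)$ containing every support in question, both $G$ and $\det G$ are bounded above and below, so there is a constant $C$ with $\int_{U_m}\|g_0-s_\e\|\,\d V\leq C\int_{\R^n}|\tilde g_0-h_\e|\,\d\lambda\to 0$. Summing over the finite set $F$ gives the claim.

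The main obstacle is not the mollification but this last piece of bookkeeping: one must keep smoothness (a trivialization-level notion) and the $L^1$-norm (a Riemannian, orthonormal-frame notion) separate, and establish that the change of fibre coordinates $\chi\circ\Phi_m^{-1}$ together with the volume density is uniformly two-sided bounded on the compact support, so that $L^1$-convergence of the Euclidean representatives transfers to $L^1(M,TM)$-convergence of the sections. The disjointness and patchwork subtleties from the earlier lemmas play no role here, since everything takes place inside a single chart domain $U_m$, on which $\Phi_m$, $\phi_m$ and $\chi$ are all smooth.
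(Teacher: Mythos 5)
Your proposal is correct and follows essentially the same route as the paper: localize to finitely many chart domains using compactness and a smooth partition of unity, approximate the Euclidean representatives by smooth compactly supported functions, and transfer back using that the coordinate change has bounded distortion on the compact supports involved. The only differences are cosmetic ones of ordering and rigour — you decompose $g=\sum_{k\in F}\rho_k g$ first and mollify each piece, while the paper approximates $g$ chart-by-chart and glues at the end; and your explicit two-sided bounds on $G$ and $\det G$ make precise the error transfer that the paper handles more loosely via Lipschitz charts and a ``fine enough'' atlas.
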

	
	\begin{proof}
		By construction of the Lebesgue measure on $\R^n$, $L^1(\R^n,\R^n)$ functions are the limit of simple functions. And simple functions can be arbitrarily well approximated in $L^1$-norm by smooth functions.
		
		So for each $g\in L^1(K,TK)$ for some compact set $K\subset M$, and every $\e>0$, there exists a finite atlas $(\phi_m,U_m)$ of $K$ with $L$-Lipschitz smooth charts $\phi_m$ such that, if we set $W_m:=\phi^{-1}(U_m) \subset \R^n$, then $g|_{U_m}\circ \phi_m$ is a $L^1(W_m,TU_m)$ section which, if we choose our atlas fine enough, will correspond isometrically to an $L^1(W_m,\R^n)$ section (via our choice of local coordinates), and thus it is $\e$-close to a $\mathcal C^\infty(W_m,\R^n)$ section $h$. Clearly, $h\circ\phi_m^{-1}\colon U_m\to TU_m$ then lies in $\mathcal C^\infty(U_m,TU_m)$ and is $L\e$-close to $g|_{U_m}$, and thus overall, using a smooth partition of unity, there exists a $\mathcal C^\infty(K,TK)$ function that is $m_0L\e$-close to $g$ (where $m_0$ is the number of charts in our atlas).
	\end{proof}

	\section{Isometric Representation of \texorpdfstring{$\Lip_0(M)$}{Lip0(M)}}
	
	\begin{theorem}\label{prop:range-of-derivative}
		For any $F\in\Lip_0(M)$ let $D\colon \Lip_0(M)\to L^\infty(M,T^*M)\colon F\mapsto\d F$. Then, the following hold:
		\begin{enumerate}[i)]
			\item $D$ is a linear isometry of $\Lip_0(M)$ into $L^\infty(M,T^*M)$.
			\item The range of $D$ are the exact $L^\infty$-currents:\label{prop:range-of-derivative:ii}
			\begin{align*}
				Z(M)=\{f\in L^\infty(M,T^*M)\colon \exists g\in \Omega^0(M)^\diamond\colon \d g=f\}.
			\end{align*}
		\end{enumerate}
	\end{theorem}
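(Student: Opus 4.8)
The plan is to prove the two assertions separately, putting most of the effort into the two inequalities behind the isometry in (i), since the surjectivity in (ii) will reduce to them together with Lemma~\ref{lem:distribution-is-abs-cont-function}.

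For (i), linearity of $D$ is immediate from linearity of the differential, so the content is the norm identity $\|\d F\|_{L^\infty}=\Lip(F)$. The inequality $\|\d F\|_{L^\infty}\le\Lip(F)$ is already supplied by Proposition~\ref{prop:Elementary_Calculus}(i), which gives $\|\d F(x)\|\le\Lip(F)$ for a.e.\ $x$. For the reverse inequality I would fix $x,y\in M$ and $\e>0$ and, using that the Riemannian distance is the infimal length of connecting curves, pick a smooth unit-speed curve $\gamma\colon[0,\ell]\to M$ from $x$ to $y$ with $\ell\le d(x,y)+\e$. Writing $L:=\|\d F\|_{L^\infty}$, the target bound $|F(x)-F(y)|\le L\ell$ would follow by integrating $(F\circ\gamma)'(t)=\d F(\gamma(t))[\gamma'(t)]$ and using $\|\d F(\gamma(t))\|\le L$ and $\|\gamma'(t)\|=1$.

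The delicate point here---and what I expect to be the main obstacle---is that $\d F$ and the chain rule are only available off a \emph{null set} $Z\subset M$ (non-differentiability points together with the exceptional set where $\|\d F\|>L$), while a single curve $\gamma$ may meet $Z$ in a set of positive one-dimensional measure, so that $(F\circ\gamma)'$ cannot be controlled pointwise. To circumvent this I would perturb $\gamma$ within a tubular neighbourhood: choosing a smooth family $(\gamma_s)_s$ with $\gamma_0=\gamma$ for which $(s,t)\mapsto\gamma_s(t)$ is a diffeomorphism onto a neighbourhood of $\gamma([0,\ell])$ (splitting $\gamma$ into finitely many embedded sub-arcs if it self-intersects), Fubini's theorem guarantees that for almost every $s$ the set $\{t\colon\gamma_s(t)\in Z\}$ is null. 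For such a ``good'' $s$ the map $F\circ\gamma_s$ is Lipschitz, hence absolutely continuous, the chain rule holds for a.e.\ $t$, and integration yields $|F(\gamma_s(\ell))-F(\gamma_s(0))|\le L\,\mathrm{length}(\gamma_s)$. Letting $s\to0$ through good values, using continuity of $F$ (so $F(\gamma_s(0))\to F(x)$, $F(\gamma_s(\ell))\to F(y)$) and $\mathrm{length}(\gamma_s)\to\ell$, gives $|F(x)-F(y)|\le L(d(x,y)+\e)$, and $\e\to0$ finishes the lower bound. I would record this ``$\Lip(h)=\essup_x\|\d h(x)\|$ for locally Lipschitz $h$'' reasoning as a standalone claim, since it is needed verbatim in (ii).

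For (ii), the inclusion $\im(D)\subseteq Z(M)$ is the easy one: if $f=\d F$ with $F\in\Lip_0(M)$, then $F$ itself is an admissible $g\in\Omega^0(M)^\diamond$ (Lipschitz functions being locally integrable), and its distributional derivative agrees a.e.\ with the classical differential $\d F=f$, so $\d g=f$. The reverse inclusion $Z(M)\subseteq\im(D)$ carries the real content. Given $f\in L^\infty(M,T^*M)$ with $f=\d g$ for a distribution $g$, I would first verify the hypotheses of Lemma~\ref{lem:distribution-is-abs-cont-function}: read in a chart, the partial derivatives $\partial_j g$ are the coordinate components of the essentially bounded $1$-form $f$ (the Riemannian and coordinate norms being locally comparable), hence lie in $L^\infty_{\mathrm{loc}}\subseteq L^p_{\mathrm{loc}}$ for every $p<\infty$, in particular for some $p>n$, while $\d g=f$ is by assumption a regular $1$-form. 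The Lemma then makes $g$ an absolutely continuous function, and combining this with $\|\d g\|_{L^\infty}=\|f\|_{L^\infty}<\infty$, the standalone claim from (i) upgrades $g$ to a genuine Lipschitz function with $\Lip(g)=\|f\|_{L^\infty}$. Setting $F:=g-g(0_M)\in\Lip_0(M)$ then gives $\d F=\d g=f$, so $f=D(F)\in\im(D)$ and the range of $D$ is exactly $Z(M)$.
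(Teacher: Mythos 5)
Your proposal is correct, but part (i) takes a genuinely different route from the paper's. For the hard inequality $\Lip(F)\leq\|\d F\|_{L^\infty}$ the paper argues by contradiction: using that $M$ is a length space (via Lemma 3.4 of \cite{BDR}) it picks two nearby points nearly realizing $\Lip(F)$, transports a neighbourhood of them to $\R^n$ by the exponential map (arranged to be a $(1+\e_2)$-bi-Lipschitz diffeomorphism near $0$), invokes the classical Euclidean identity $\Lip(\tilde F)=\|\d\tilde F\|_{\infty}$ (proved by convolution and cited from \cite{CKK2016}, \cite{F2024}), and then compares norms through the pullback $(\exp_x\circ\iota)^*$ to obtain $L\leq K(1+\e_2)^2$, a contradiction. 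You instead prove the inequality directly: integrate $\d F$ along a smooth near-minimizing curve and defeat the null-set obstruction by a tubular-neighbourhood/Fubini perturbation, so that almost every nearby curve meets the bad set in a $t$-null set, then pass to the limit through good parameters. Your argument is self-contained (no appeal to the Euclidean mollification result or the length-space lemma), at the price of the tubular-neighbourhood technicalities (splitting self-intersecting curves into finitely many embedded sub-arcs, continuity of endpoints and lengths as $s\to0$), all of which are standard and which you handle correctly. Part (ii) is essentially the paper's argument: both verify the hypotheses of Lemma~\ref{lem:distribution-is-abs-cont-function} (you do so more explicitly, in charts) and then upgrade the resulting absolutely continuous $g$ with $\d g\in L^\infty(M,T^*M)$ to a Lipschitz function by integrating the bounded derivative --- you via your ``standalone claim'', the paper via ``the fundamental theorem of calculus''. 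One caveat, shared equally by the paper: your standalone claim is stated for \emph{locally Lipschitz} $h$, whereas at that point $g$ is only known to be absolutely continuous, so applying it verbatim requires an extra step (e.g.\@ Morrey's embedding with $p=\infty$, or the a.e.\@ classical differentiability of $W^{1,p}_{\mathrm{loc}}$ functions for $p>n$, to see that $g$ is locally Lipschitz and a.e.\@ differentiable with classical differential $f$); since the paper glosses exactly the same point with its appeal to the fundamental theorem of calculus, this does not put your proof below the paper's own standard, but it is worth making explicit.
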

	
	\begin{proof}
		\begin{enumerate}[i)]
			\item Let $F\in\Lip_0(M)$, $L:=\Lip(F)$, $K:=\|DF\|_\infty$. We know from Proposition \ref{prop:Elementary_Calculus}.\ref{list:Elementary_Calculus1}) that $K\leq L$, so assume that $K < L$ were true, and let $\e_1:=(L-K)/2$. By definition of $L$ we can find points $x, y\in M$ such that $F(x)-F(y) \geq (L-\e_1)d(x,y)$. Since $M$ is a length space, we can even find such points arbitrarily close to each other, this follows e.g.\@ from Lemma 3.4 in \cite{BDR}.
			
			Recall that the exponential map $\exp_x\colon U\subset T_xM\to M$ maps a tangent vector $v\in U$ to $\gamma_v(1)$, where $\gamma_v$ is the unique geodesic satisfying $\gamma_v(0)=x$ and $\gamma_v'(0)=v$. Since $\exp_x$ is a smooth map whose Jacobian at $0\in U^\circ$ is the identity, we can restrict $\exp_x$ to a neighbourhood of $0$ small enough that $exp_x$ is a diffeomorphism and both it and its inverse have Lipschitz constant arbitrarily close to $1$.
			
			So assume w.l.o.g.\@ that $\exp_x$ is defined on such a neighbourhood $\tilde U$, that it is a $(1+\e_2)$-isometry for some $\e_2>0$, and that $y$ is in the image of $\exp_x$. If we now choose a basis in $T_xM$, i.e.\@ if we map it to $\R^n$ with the smooth isometry $\iota$, then it is a classical fact that $\Lip(\tilde F)=\|\d\tilde F\|_\infty$ for the function $\tilde F:=F\circ\exp_x\circ\iota\colon \R^n\to\R$, shown via convolution as in \cite{CKK2016} or \cite{F2024}.
			
			Thus, we now know that
			\begin{align*}
				L&=\Lip(F)= \Lip(\tilde F\circ\iota^{-1}\circ\exp_x^{-1}) \leq \Lip(\tilde F)(1+\e_2)=\|\d\tilde F\|_{L^\infty(\R^n,\R^n)}(1+\e_2).
			\end{align*}
			Note at this point that $\tilde F$ is also the pullback $\tilde F = (\exp_x\circ\iota)^* (F)$, that the exterior derivative commutes with pullbacks even in the distributional setting $\d\tilde F=(\exp_x\circ\iota)^*(\d F)$ \cite[Thm.~3.1.23]{GKOS}, and that the pullback $(\exp_x\circ\iota)^*$ of a $1$-form is a linear map that has operator norm $\|(\exp_x\circ\iota)^*\|\leq\Lip(\exp_x\circ\iota)$ (due to smoothness). So, in total,
			\begin{align*}
				L\leq \|(\exp_x\circ\iota)^*(\d F)\|_{L^\infty(\R^n,\R^n)}(1+\e_2) \leq K(1+\e_2)^2.
			\end{align*}
			Clearly, there exists a valid choice of $\e_2>0$ that is small enough such that this is contradictory since $K < L$.
			
			\item By definition, every function value $D F$ lies in $Z(M)$.
			
			On the other hand, if we have an exact $1$-form $\d g\in Z(M)$, use Lemma \ref{lem:distribution-is-abs-cont-function} to show that $g$ is absolutely continuous. Then, we can use the fundamental theorem of calculus to get that $g$ must be Lipschitz due to the a.e.\@ global bound on its derivative $\d g\in Z(M)\subset L^\infty(M,T^*M)$.\qedhere
		\end{enumerate}
	\end{proof}
	
	\begin{corollary}\label{cor:closed-currents}
		If $M$ is simply connected, then $Z(M)$ is equivalently also the space of all closed essentially bounded currents, $Z(M)=\{f\in L^\infty(M,T^*M)\colon \d f=0\}$.
	\end{corollary}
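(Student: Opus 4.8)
The plan is to prove the two inclusions separately. The inclusion $Z(M)\subseteq\{f\in L^\infty(M,T^*M)\colon \d f=0\}$ holds with no hypothesis on $M$: if $f=\d g$ for some $g\in\Omega^0(M)^\diamond$, then $\d f=\d\d g=0$, since the unique continuous extension of the exterior derivative to distributional forms still satisfies $\d\circ\d=0$ by \cite[Thm.~3.1.18]{GKOS}. The whole content of the corollary is therefore the reverse inclusion, which is exactly a Poincar\'e-type lemma for distributional $1$-forms: on a simply connected $M$, every closed essentially bounded current is exact (i.e.\@ is the distributional differential of some distribution).

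I would first recall that for \emph{smooth} forms this is the classical statement $H^1_{\mathrm{dR}}(M)=0$, which is valid because simple connectivity forces $H^1(M,\R)\cong\mathrm{Hom}(\pi_1(M),\R)=0$. The remaining task is to upgrade this from smooth forms to currents. My preferred route is de Rham's regularization: there exist a smoothing operator $R$ sending distributional forms to smooth forms and commuting with $\d$, together with a chain homotopy $A$ satisfying $R-\id=\d A+A\d$ on $\Omega^\bullet(M)^\diamond$. Given a closed $f$ (so $\d f=0$), the form $Rf$ is a closed smooth $1$-form, hence $Rf=\d h$ for some $h\in\mathcal C^\infty(M)$ by the smooth case; moreover $f-Rf=\d(Af)+A(\d f)=\d(Af)$. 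Summing yields $f=\d(h+Af)$ with $h+Af\in\Omega^0(M)^\diamond$, so $f\in Z(M)$.

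An alternative, more hands-on route would cover $M$ by geodesically convex (hence contractible) charts, apply the distributional Poincar\'e lemma on each star-shaped coordinate domain to produce local potentials $g_U$ with $\d g_U=f|_U$, and then glue. On every overlap $\d(g_U-g_V)=0$ forces $g_U-g_V$ to be locally constant, so the differences assemble into a \v{C}ech $1$-cocycle valued in the constant sheaf $\underline\R$; its class lies in $H^1(M,\R)$, which vanishes by simple connectivity, allowing the local potentials to be corrected by additive constants into a single global $g\in\Omega^0(M)^\diamond$ with $\d g=f$. Note that only the \emph{closedness} of $f$ and its being a distributional $1$-form enter; the $L^\infty$ membership is inherited automatically, since both sets are subsets of $L^\infty(M,T^*M)$.

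The main obstacle in either route is the passage from the smooth Poincar\'e lemma to its distributional counterpart. In the regularization approach one must verify that the smoothing operator genuinely commutes with $\d$, so that closedness is preserved and the homotopy identity $R-\id=\d A+A\d$ holds verbatim on currents rather than merely on smooth forms; in the \v{C}ech approach one must check that the gluing obstruction really lands in $H^1(M,\R)$ and is annihilated by $\pi_1(M)=0$. Once that bridge between the smooth and distributional de Rham complexes is in place, the corollary follows at once.
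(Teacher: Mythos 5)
Your main (regularization) route is essentially the paper's own proof: the paper likewise invokes regularization/homotopy operators $R_\e$, $A_\e$ satisfying $\omega = R_\e\omega + \d A_\e\omega - A_\e(\d\omega)$, applies the classical smooth Poincar\'e lemma for simply connected manifolds to the closed smooth $1$-form $R_\e\omega$, and concludes $\omega = \d(\tau + A_\e\omega)$. The only discrepancies are cosmetic: in the paper the operators form a net, with $R_\e\omega$ a classical form only for $\e<\e_0(\omega)$, and you state the homotopy identity as $R-\id=\d A+A\d$ but then use the opposite sign convention in $f-Rf=\d(Af)+A(\d f)$, which merely flips the sign of $Af$ in the final potential and does not affect the argument.
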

	
	\begin{proof}	
		This follows from the fact that if $M$ is simply connected, then every closed distributional $1$-form is locally exact (see \cite[Thm.3.1.30]{GKOS}), and, as we will see, is arbitrarily well approximable by an exact form $\d A_\e\omega$ plus (or minus) a smooth form $R_\e\omega$ (which is closed as the difference of two closed forms). But for smooth forms, it is a well-known classical result that simply connected implies that all closed $1$-forms on $M$ are exact.
		
		More precisely: there exist nets of operators $R_\e,A_\e$ ($R_\e$ mapping $k$-forms to $k$-forms and $A_\e$ mapping $k$-forms to $(k-1)$-forms) such that for each distributional 1-form $\omega\in\Omega^1(M)^\diamond$, there exists an $\e_0(\omega)$ such that for all $\e<\e_0$, $R_\e\omega$ is a classical $1$-form and:
		\begin{align*}
			\omega = R_\e\omega + \d A_\e\omega - A_\e(\d\omega).
		\end{align*}
		It follows by applying $\d$ to both sides of this equation and setting $\d\omega=0$ that $R_\e\omega$ is closed if $\omega$ is. Thus, by simple connectedness of $M$, it is exact, and therefore there exists a smooth 0-form $\tau$ such that $\d\tau = R_\e\omega$, implying that for closed forms $\omega$
		\begin{align*}
			\omega &= d(\tau + A_\e\omega).\qedhere
		\end{align*}
	\end{proof}
	
	\section{Constructing the Pre-Adjoint}
	This last section proves that the operator $D$ we've been working with is, in fact, the adjoint of some other operator ${}^*\!D$, which will serve as our isometry between the Lipschitz-Free Space $\F(M)$ and the quotient $L^1(M,TM)/Z_\perp(M)$.
	
	\begin{proposition}\label{prop:wtow}
		The isometry $D$ is weak$^*$-to-weak$^*$ continuous.
	\end{proposition}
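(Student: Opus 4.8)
The plan is to exhibit $D$ as the adjoint of a bounded operator between the preduals $L^1(M,TM)$ and $\F(M)$; since adjoint operators are automatically weak$^*$-to-weak$^*$ continuous, this yields the claim at once. Concretely, I want to construct a bounded operator ${}^*\!D\colon L^1(M,TM)\to\F(M)$ with $({}^*\!D)^*=D$, that is, satisfying
\[
\langle F, {}^*\!D(g)\rangle_{(\Lip_0,\F)} = \langle DF, g\rangle_{(L^\infty,L^1)} = \int_M\langle\d F,g\rangle\,\d V
\]
for all $F\in\Lip_0(M)$ and all $g\in L^1(M,TM)$.

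First I would define ${}^*\!D$ on the dense subspace of smooth compactly supported sections, which is dense by Lemmas~\ref{lem:test-functions-dense} and \ref{lem:smooth-compact}. For such a $g$, the contraction $\iota_g\d V$ is a smooth compactly supported $(n-1)$-form, so it is a legitimate test object for the pairing \eqref{eq:dual-pairing}. Using the pointwise identity $\langle\d F,g\rangle\,\d V=\d F\wedge\iota_g\d V$, the distributional integration-by-parts formula for the exterior derivative (\cite[Thm.~3.1.23]{GKOS}), and the identity $\d(\iota_g\d V)=(\div g)\,\d V$, one obtains
\[
\int_M\langle\d F,g\rangle\,\d V = \langle\d F,\iota_g\d V\rangle = -\int_M F\,(\div g)\,\d V.
\]
I therefore set ${}^*\!D(g)$ to be the functional $F\mapsto-\int_M F\,\div(g)\,\d V$, i.e.\@ integration against the compactly supported, absolutely continuous signed measure $\nu_g:=-(\div g)\,\d V$.

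The crux is to verify that this functional genuinely lies in the predual $\F(M)$ and not merely in $\Lip_0(M)^*$. By the divergence theorem $\nu_g(M)=-\int_M\div(g)\,\d V=0$, so $\nu_g$ is a compactly supported measure of total mass zero. Such a measure is approximable in $\F(M)$-norm by finitely supported zero-mean measures, i.e.\@ finite combinations of molecules $\delta(x)-\delta(y)$: refining a partition of $\supp\nu_g$ and pairing equal amounts of positive and negative mass, one sees that displacing a unit mass over distance $\e$ costs $\e$ in $\F(M)$-norm, so the approximation converges. Since each such combination lies in $\overline{\mathrm{span}}\{\delta(x)\}=\F(M)$, so does the limit. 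The norm bound is then immediate from the isometry property in Theorem~\ref{prop:range-of-derivative}, because $|\langle F,{}^*\!D(g)\rangle|=|\int_M\langle\d F,g\rangle\,\d V|\leq\|\d F\|_\infty\|g\|_{L^1}=\Lip(F)\,\|g\|_{L^1}$, whence $\|{}^*\!D(g)\|_{\F(M)}\leq\|g\|_{L^1}$.

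Finally I would extend ${}^*\!D$ from the dense subspace to all of $L^1(M,TM)$ by continuity, since it is norm-at-most-one there, and confirm $({}^*\!D)^*=D$ by checking the defining pairing identity on smooth compactly supported $g$ and passing to the limit. The main obstacle is the third paragraph, namely establishing membership of the measure-induced functional in $\F(M)$ and handling the distributional integration-by-parts carefully, since for a merely Lipschitz $F$ the object $\d F$ is \emph{a priori} only the distributional derivative of Proposition~\ref{prop:Elementary_Calculus}; the remaining ingredients—density, the norm estimate, and the general principle that adjoints are weak$^*$-to-weak$^*$ continuous—are routine.
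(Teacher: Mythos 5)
Your proof is correct, but it takes a genuinely different route from the paper's. The paper verifies the continuity directly: for a bounded sequence $(f_k)_{k\in\N}$ in $\Lip_0(M)$, weak$^*$ convergence coincides with pointwise convergence (Weaver, Thm.~2.37), hence gives uniform convergence on compacta; since $(\d f_k)_{k\in\N}$ is bounded, it suffices to test against the dense set of smooth compactly supported sections $g$ (Lemmas~\ref{lem:test-functions-dense} and \ref{lem:smooth-compact}), where the same integration by parts you use, $\langle\d f_k,g\rangle_{(L^\infty,L^1)}=-\int\div(g)\,\d V\wedge f_k$, finishes the argument. You instead build the pre-adjoint ${}^*\!D$ explicitly, sending a smooth compactly supported $g$ to the measure $-\div(g)\,\d V$, prove that this measure defines an element of $\F(M)$ by discretization, and then quote that adjoint operators are automatically weak$^*$-to-weak$^*$ continuous. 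Each approach buys something. Yours is tighter on the functional-analytic side: the paper's argument literally establishes only \emph{sequential} weak$^*$ continuity on \emph{bounded} sets, and upgrading this to genuine weak$^*$-to-weak$^*$ continuity requires an unstated appeal to the Krein--\v{S}mulian theorem together with weak$^*$ metrizability of bounded sets (via separability of $\F(M)$); an adjoint is weak$^*$ continuous with no such subtlety, and as a bonus you obtain the operator ${}^*\!D$ concretely, whereas the paper only gets it abstractly in Theorem~\ref{thm:isometry-of-lip-free} as a consequence of Proposition~\ref{prop:wtow}. The price is your third paragraph: the fact that a compactly supported measure of finite total variation induces an element of $\F(M)$ is standard but does need proof; your cell-wise discretization estimate (displacing mass by at most $\e$ within each cell costs at most $\e\,|\nu_g|(M)$ in $\Lip_0(M)^*$-norm) is the right argument, and in fact it requires neither the zero-total-mass observation from the divergence theorem nor any pairing of positive and negative mass --- any compactly supported finite signed measure is a norm limit of finitely supported ones, and $\F(M)$ is by definition closed in $\Lip_0(M)^*$.
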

	
	\begin{proof}
		Let $A$ be a bounded subset of $\Lip_0(M)$, then by Theorem 2.37 in \cite{Weaver}, weak$^*$ convergence on $A$ coincides with pointwise convergence, i.e.\@ $(f_k)_{k\in\N} \overset{w^*}\to f$ iff for all $x\in M$, $f_k(x)\to f(x)$. Since $A$ is bounded with respect to the Lipschitz norm, assume all $f_k$ and $f$ are at most $L$-Lipschitz, giving us uniform convergence on compact sets.
		
		On the other hand, weak$^*$ convergence of $(\d f_k)_{k\in\N}$ to $\d f$ in $L^\infty(M,T^*M)$ by definition means that for all functions $g\in L^1(M,TM)$, $\langle \d f_k,g\rangle_{(L^\infty,L^1)}\to\langle \d f,g\rangle_{(L^\infty,L^1)}$. Since $\d f_k$ is a bounded sequence, it is sufficient to check the above condition for all $g$ that belong to a dense subset of $L_1(M,TM)$. Thus, by Lemma \ref{lem:smooth-compact}, it is enough to prove that $\langle\d f_k,g\rangle_{(L^\infty,L^1)}\to\langle\d f,g\rangle_{(L^\infty,L^1)}$ for all $g\in L^1(M,TM)$ that are smooth and compactly supported. So let $g$ be such a section and let $K$ be its support. Then,
		\begin{align*}
			\langle\d f_k,g\rangle_{(L^\infty,L^1)} = - \int_K\div(g)\d V\wedge f_k
		\end{align*}
		Since $K$ is compact, $f_k\to f$ uniformly on $K$, whereas $\div(g)\d V$ is a smooth $n$-form. Therefore, it is bounded, hence $\div(g)\d V\wedge f_k\to \div(g)\d V\wedge f$ uniformly on $K$, proving $\langle\d f_k,g\rangle_{(L^\infty,L^1)}\to \langle\d f,g\rangle_{(L^\infty,L^1)}$ as $k\to\infty$ for all $(f_k)_{k\in\N}$ converging weak$^*$ to $f$.
	\end{proof}

	Recall that by test functions $\phi$, and the space of test functions $\mathcal D(M)$, we mean smooth, scalar-valued ($\mathcal C^\infty(M)$) functions with compact support in $M$. And that the (classical) Lie derivative $L_g$ for a (smooth) vector field $g$ maps any $k$-form $\omega$ to the $k$-form representing ``the derivative of $\omega$ along the (direction of) flow of $g$'', in the sense that it is characterised by the following three axioms:
	\begin{enumerate}
		\item Any $0$-form $f\in \mathcal C^\infty(M)$ is mapped to its directional derivative in the direction of $g$: $L_gf:=\nabla_gf\in \mathcal C^\infty(M)$
		\item For all smooth forms $\sigma, \tau$, we have $L_g(\sigma\wedge\tau)=L_g(\sigma)\wedge\tau+\sigma \wedge L_g(\tau)$.
		\item $L_g$ is $\R$-linear and $\d L_g=L_g\d$.
	\end{enumerate}

	\begin{definition}
		Let $\d V\in\Omega^n(M)$ be our volume form and $g\in L^1(M,TM)$ an integrable distributional vector field, then $\div(g)\in\mathcal D'(M)$ is defined as
		\begin{align*}
			\div(g)(\d V):=L_g(\d V).
		\end{align*}
		The proof that these classical definitions of the divergence and Lie derivative $L_g$ extend to distributional vector fields can be found in \cite[Thm.~3.1.41]{GKOS}.
	\end{definition}
	
	\begin{lemma}\label{lem:divergence-to-dual-pairing}
		For the volume form $\d V=\d x^1\wedge\dotsb\wedge\d x^n$, a test function $\phi\in\mathcal D(M)$ and an integrable distributional vector field $g\in L^1(M,TM)$, we have
		\begin{align*}
			\div(g)(\d V)(\phi)=-\int_M\langle \d \phi,g\rangle\d V.
		\end{align*}
	\end{lemma}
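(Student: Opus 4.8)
The plan is to unwind the definition $\div(g)(\d V)=L_g(\d V)$ as a distributional $n$-form and pair it against $\phi$ via the dual pairing (\ref{eq:dual-pairing}). Since $\phi\in\Omega^0_c(M)$ is a $0$-form, wedging with it is scalar multiplication, so the left-hand side is
\begin{align*}
\div(g)(\d V)(\phi)=\langle L_g(\d V),\phi\rangle_{(\Omega^n(M)^\diamond,\Omega^0_c(M))}=\int_M \phi\, L_g(\d V),
\end{align*}
the last expression read distributionally. Both sides of the asserted identity are continuous linear functionals of $g$: the right-hand side equals $-\langle \d\phi,g\rangle_{(L^\infty,L^1)}$ since $\d\phi\in L^\infty(M,T^*M)$ is bounded with compact support, hence defines a norm-continuous functional on $L^1(M,TM)$ by Proposition \ref{prop:L1-Linfty-duality}; the left-hand side is continuous in $g$ because $L_g$ arises as the continuous extension of the classical Lie derivative to distributional vector fields \cite[Thm.~3.1.41]{GKOS}. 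By Lemmas \ref{lem:test-functions-dense} and \ref{lem:smooth-compact} the smooth compactly supported sections are dense in $L^1(M,TM)$, so it suffices to prove the identity for a smooth compactly supported $g$, where every step below is classical.

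So let $g$ be smooth and compactly supported. Because $\d V$ has top degree, $\d(\d V)=0$, so Cartan's magic formula $L_g=\d\,\iota_g+\iota_g\,\d$ collapses to $L_g(\d V)=\d(\iota_g\,\d V)$, where $\iota_g$ denotes interior multiplication and $\iota_g\,\d V$ is a smooth $(n-1)$-form. Applying the Leibniz rule $\d(\phi\,\iota_g\,\d V)=\d\phi\wedge\iota_g\,\d V+\phi\,\d(\iota_g\,\d V)$ and integrating, the left-hand side vanishes by Stokes' theorem since $\phi\,\iota_g\,\d V$ is compactly supported and $M$ is boundaryless, giving
\begin{align*}
\int_M \phi\,\d(\iota_g\,\d V)=-\int_M \d\phi\wedge\iota_g\,\d V.
\end{align*}
It remains to identify the integrand on the right. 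Contracting the vanishing $(n+1)$-form $\d\phi\wedge\d V$ with $g$ and using the antiderivation property of $\iota_g$ gives $0=\iota_g(\d\phi\wedge\d V)=(\iota_g\,\d\phi)\,\d V-\d\phi\wedge\iota_g\,\d V$, hence $\d\phi\wedge\iota_g\,\d V=(\iota_g\,\d\phi)\,\d V=\langle \d\phi,g\rangle\,\d V$, the last equality being the pointwise duality pairing of the covector $\d\phi$ with the vector $g$. Combining the displays proves the identity for smooth compactly supported $g$, and the density argument of the first paragraph extends it to all $g\in L^1(M,TM)$.

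The main obstacle I anticipate is not the classical computation but the reduction in the first paragraph: one must be sure that the continuous extension of $L_g$ from \cite[Thm.~3.1.41]{GKOS} is genuinely compatible with $L^1$-convergence of $g$ against a fixed test function $\phi$, so that the agreement of the two continuous functionals on the dense set of smooth sections propagates to all of $L^1(M,TM)$. Should the topology underlying that extension be weaker than the $L^1$-norm topology, I would instead verify directly that the distributional definitions of $\iota_g$ and $\d$ in \cite{GKOS} satisfy the Leibniz and contraction identities used above, so that the same chain of equalities can be carried out at the distributional level without ever passing to smooth $g$.
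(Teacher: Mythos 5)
Your proposal is correct, but it takes a genuinely different route from the paper. The paper never approximates $g$: it works directly at the distributional level, in local coordinates. It expands $L_g(\d V)$ via the distributional Leibniz rule \cite[Thm.~3.1.41(iv)]{GKOS} into $\sum_i \d x^1\wedge\dotsb\wedge L_g(\d x^i)\wedge\dotsb\wedge\d x^n$, computes $L_g(\d x^i)=\d\langle \d x^i,g\rangle=\sum_j \frac{\partial g_i}{\partial x^j}\d x^j$ using the distributional Cartan formula \cite[Thm.~3.1.25(iii)]{GKOS}, collects terms to obtain $\div(g)(\d V)=\sum_i\frac{\partial g_i}{\partial x^i}\d V$, and then the concluding ``partial integration'' against $\phi$ is nothing but the definition of the distributional derivative --- so no limiting process in $g$ is ever needed. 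You instead shift all distributional content into one continuity claim and do the rest classically (Cartan plus Stokes plus the antiderivation identity), which has the advantage of being coordinate-free, whereas the paper's computation (like the lemma's statement itself) is local. The cost is exactly the point you flag: you must know that the GKOS extension of $g\mapsto L_g(\d V)$ is compatible with $L^1$-limits. This is fine --- $L^1$-convergence implies convergence of $g$ as a distributional vector field (test against bounded compactly supported covector fields), and the GKOS extension is weakly continuous in $g$ --- but it is worth noting that your fallback plan is actually the cleanest argument of all and is essentially the paper's proof stripped of coordinates: distributionally, $L_g(\d V)=\d(i_g\,\d V)$ since $\d(\d V)=0$, where $i_g\,\d V$ is an $L^1$ $(n-1)$-form, and then
\begin{align*}
	\d(i_g\,\d V)(\phi)=(-1)^{n}\int_M (i_g\,\d V)\wedge\d\phi=(-1)^{n}(-1)^{n-1}\int_M \langle\d\phi,g\rangle\,\d V=-\int_M\langle\d\phi,g\rangle\,\d V,
\end{align*}
using your contraction identity and the sign convention in the definition of the distributional exterior derivative (which replaces Stokes' theorem). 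Had you committed to that version, the density reduction and its attendant continuity worry would disappear entirely.
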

	
	\begin{proof}
		This fact can also be found in \cite{GKOS} in the paragraph below Thm.~3.1.42. Since the proof is so short, it is included here for completeness.
		
		We know from \cite[Thm.~3.1.41(iv)]{GKOS} that the distributional Lie derivative satisfies the Leibniz rule, thus:
		\begin{align}\label{eq:divergence}
			\div(g)(\d V)=L_g(\d V)=\sum_{i=1}^{n}\d x^1 \wedge \dotsb \wedge L_g(\d x^i) \wedge \dotsb \wedge \d x^n.
		\end{align}
		On the other hand, the distributional Lie derivative still satisfies the classical identity $L_g\omega=\d i_g\omega + i_g\d\omega$ (see \cite[Thm.~3.1.25(iii)]{GKOS}). Thus, for $\omega:=\d x^i$, we get:
		\begin{align*}
			L_g(\omega) = (\d i_g\omega) + (i_g\d\omega) = \d(\langle \d x^i,g\rangle) + 0 = \sum_{j=1}^n\frac{\partial g_i}{\partial x^j}\d x^j.
		\end{align*}
		Here $g_i$ is the $i$-th component of $g$ in local coordinates, thus it is a distribution in $\mathcal D'(M)$.
		
		So, since the cross-terms where $j\neq i$ cancel when we substitute this expression for $L_g(\d x^i)$ into equation (\ref{eq:divergence}), as expected, $\div(g)(\d V)=\sum_{i=1}^n\frac{\partial g_i}{\partial x^i}\d V$ in local coordinates.
		
		From there it is clear by partial integration in each of the $n$ summands that on test functions $\phi$,
		\begin{align*}
			\div(g)(\d V)(\phi)&=\int_M\sum_{i=1}^n\frac{\partial g_i}{\partial x_i}\phi\d V = - \sum_{i=1}^{n} \int_Mg_i\frac{\partial \phi}{\partial x_i}\d V = -\int_M\langle \d \phi,g\rangle\d V.\qedhere
		\end{align*}
	\end{proof}
	
	We are now ready to define the pre-annihilator of $Z(M)$, the space of all exact essentially bounded currents $Z(M)=\{f\in L^\infty(M,T^*M)\colon\exists g\in\Omega^0(M)^\diamond\colon\d g=f\}$:
	\begin{definition}
		Let $I(M)$ denote the divergence-free integrable sections, and recall that the pre-annihilator of a set $S$ is the set of all elements of the predual that vanish on all elements of $S$:
		\begin{align*}
			I(M)&:=\{g\in L^1(M,TM)\colon \div(g)(\d V)=0 \in \mathcal D'(M)\}.\\
			Z_\perp(M)&:=\{g\in L^1(M,TM)\colon \forall F\in \Lip_0(M)\colon \langle \d F,g\rangle_{(L^\infty,L^1)} = 0\}.
		\end{align*}
	\end{definition}
	
	\begin{lemma}
		The pre-annihilator $Z_\perp(M)$ is contained in the divergence-free integrable sections $I(M)$.
	\end{lemma}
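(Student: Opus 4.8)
The plan is to show that any $g\in Z_\perp(M)$ satisfies $\div(g)(\d V)=0$ as an element of $\mathcal D'(M)$, which amounts to checking that $\div(g)(\d V)(\phi)=0$ for every test function $\phi\in\mathcal D(M)$. The bridge between the two definitions is Lemma~\ref{lem:divergence-to-dual-pairing}, which identifies $\div(g)(\d V)(\phi)$ with $-\int_M\langle\d\phi,g\rangle\d V=-\langle\d\phi,g\rangle_{(L^\infty,L^1)}$. Thus it suffices to produce, for each test function $\phi$, an element $F\in\Lip_0(M)$ whose differential agrees with $\d\phi$, since then membership of $g$ in $Z_\perp(M)$ forces the pairing to vanish.

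First I would observe that every $\phi\in\mathcal D(M)$ is Lipschitz: it is smooth with compact support, so its differential is continuous and vanishes outside a compact set, hence is bounded, which gives a global Lipschitz bound. The only obstruction to $\phi$ lying in $\Lip_0(M)$ is the normalization at the base point, so I would set $F:=\phi-\phi(0_M)$. This $F$ is still Lipschitz, now satisfies $F(0_M)=0$, and therefore lies in $\Lip_0(M)$. Crucially, subtracting a constant does not change the differential, so $\d F=\d\phi$ both classically and distributionally (for smooth functions the distributional exterior derivative, being the unique continuous extension of the classical one, agrees with it).

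Combining these, for $g\in Z_\perp(M)$ and any $\phi\in\mathcal D(M)$ I would compute
\[
	\div(g)(\d V)(\phi)=-\langle\d\phi,g\rangle_{(L^\infty,L^1)}=-\langle\d F,g\rangle_{(L^\infty,L^1)}=0,
\]
where the last equality is exactly the defining property of $Z_\perp(M)$ applied to $F\in\Lip_0(M)$. Since this holds for all test functions, $\div(g)(\d V)$ is the zero distribution and $g\in I(M)$, completing the inclusion.

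The argument is essentially a single identification followed by the normalization trick $F=\phi-\phi(0_M)$, so I do not anticipate a genuine obstacle. The only point requiring care is confirming that the smooth object $\d\phi$ is legitimately paired against $g$ via $\langle\cdot,g\rangle_{(L^\infty,L^1)}$: this is fine because $\d\phi$ is a continuous $1$-form with compact support, hence a bounded (thus $L^\infty(M,T^*M)$) section, and smooth compactly supported forms embed into the distributional forms by integration (as recorded after (\ref{eq:dual-pairing})) consistently with the extension of $\d$, so the two descriptions of $\d\phi$ entering Lemma~\ref{lem:divergence-to-dual-pairing} and the definition of $Z_\perp(M)$ coincide.
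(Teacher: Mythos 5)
Your proposal is correct and follows essentially the same route as the paper's own proof: both reduce membership in $I(M)$ to testing against $\phi\in\mathcal D(M)$ via Lemma~\ref{lem:divergence-to-dual-pairing}, and both use the normalization trick $F=\phi-\phi(0_M)\in\Lip_0(M)$ (with $\d F=\d\phi$) to invoke the defining property of $Z_\perp(M)$. Your added justifications --- that compactly supported smooth functions are Lipschitz on a connected Riemannian manifold, and that $\d\phi$ is a legitimate $L^\infty(M,T^*M)$ section for the pairing --- are accurate refinements of steps the paper leaves implicit.
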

	
	\begin{proof}
		Note that the test functions are a subset of the set of all Lipschitz functions on $M$: $\mathcal D(M) \subset \Lip(M)$. And the condition $\langle \d F, g \rangle_{(L^\infty,L^1)} = 0$ for all $F\in \Lip_0(M)$ implies that also $\langle \d (f - f(0)) + 0, g \rangle_{(L^\infty,L^1)} = 0$ for all $f\in\Lip(M)$, and thus $\langle \d f, g\rangle_{(L^\infty,L^1)} = 0$ for all $f\in\mathcal D(M)$, implying $\div(g)(\d V)=0$ when evaluated against all test functions.

		However, that means that by Lemma \ref{lem:divergence-to-dual-pairing}, for all evaluations against test functions $\phi$, $\int_M\div(g)(\d V)\wedge \phi = -\int_M\langle\d \phi,g\rangle\d V = 0$, and thus $\div(g)(\d V)=0\in\mathcal D'(M)$.
		
		It follows that $Z_\perp(M)\subseteq I(M)$.
	\end{proof}
	
	We now give two important cases where we have equality between these two sets, together covering every case where $M$ is compact (without boundary):
	
	\begin{proposition}\label{prop:complete-equality}
		If $M$ is complete and unbounded, then the pre-annihilator $Z_\perp(M)$ of $Z(M)$ is the set of divergence-free integrable sections: $I(M)=Z_\perp(M)$.
	\end{proposition}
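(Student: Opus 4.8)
The inclusion $Z_\perp(M)\subseteq I(M)$ is already in hand, so the task is the reverse inclusion $I(M)\subseteq Z_\perp(M)$: given an integrable section $g$ with $\div(g)(\d V)=0$, I must show $\langle\d F,g\rangle_{(L^\infty,L^1)}=0$ for \emph{every} $F\in\Lip_0(M)$, not merely for the test functions to which the hypothesis applies directly. The plan is a two-stage approximation, first cutting $F$ off to compact support and then smoothing it. Because $M$ is complete, Hopf--Rinow guarantees that the closed balls $\overline{B_R(0_M)}$ are compact and that $x\mapsto d(x,0_M)$ is a proper $1$-Lipschitz function; because $M$ is unbounded, these balls exhaust $M$ without ever equalling it. I would therefore fix the Lipschitz cut-offs $\chi_R:=\max\{0,\min\{1,2-d(\cdot,0_M)/R\}\}$, which equal $1$ on $B_R(0_M)$, vanish outside $B_{2R}(0_M)$ and satisfy $\|\d\chi_R\|_\infty\leq 1/R$, and set $F_R:=F\chi_R$, a \emph{compactly supported} Lipschitz function.

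In the first stage I would show $\langle\d F_R,g\rangle_{(L^\infty,L^1)}\to\langle\d F,g\rangle_{(L^\infty,L^1)}$ as $R\to\infty$. Using the Leibniz rule $\d F_R=\chi_R\,\d F+F\,\d\chi_R$ (valid a.e.\ via Proposition~\ref{prop:Elementary_Calculus}), the pairing splits into
\[
\int_M\chi_R\langle\d F,g\rangle\,\d V+\int_M F\langle\d\chi_R,g\rangle\,\d V,
\]
where the first integrand is dominated by $\Lip(F)\,\|g\|\in L^1(M)$ and converges pointwise to $\langle\d F,g\rangle$, so dominated convergence sends it to $\langle\d F,g\rangle_{(L^\infty,L^1)}$. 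The second integral is the crucial ``boundary'' term: its integrand is supported on the annulus $B_{2R}(0_M)\setminus B_R(0_M)$, where $F(0_M)=0$ and the Lipschitz bound give $|F|\leq \Lip(F)\,d(\cdot,0_M)\leq 2\Lip(F)R$, while $|\d\chi_R|\leq 1/R$; hence $|F\,\d\chi_R|\leq 2\Lip(F)$ uniformly, and
\[
\Bigl|\int_M F\langle\d\chi_R,g\rangle\,\d V\Bigr|\leq 2\Lip(F)\int_{M\setminus B_R(0_M)}\|g\|\,\d V\xrightarrow{R\to\infty}0,
\]
the tail integral vanishing precisely because $g\in L^1(M,TM)$.

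In the second stage I would verify $\langle\d F_R,g\rangle_{(L^\infty,L^1)}=0$ for each fixed $R$. By Lemma~\ref{lem:divergence-to-dual-pairing} the hypothesis $\div(g)(\d V)=0$ says exactly that $\int_M\langle\d\phi,g\rangle\,\d V=0$ for all $\phi\in\mathcal D(M)$, and the point is to extend this from smooth to Lipschitz test objects. Since $F_R$ is Lipschitz with compact support, I would mollify it chart-by-chart against a finite atlas covering $\supp F_R$ with a subordinate partition of unity, producing $\phi_j\in\mathcal D(M)$ whose supports lie in a fixed compact set, with $\|\d\phi_j\|_\infty$ bounded by a multiple of $\Lip(F_R)$ and $\d\phi_j\to\d F_R$ almost everywhere. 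Dominated convergence (the majorant being this uniform bound times $\|g\|$ on the fixed compact set) then yields $0=\int_M\langle\d\phi_j,g\rangle\,\d V\to\langle\d F_R,g\rangle_{(L^\infty,L^1)}$, so this pairing is $0$. Combining the two stages gives $\langle\d F,g\rangle_{(L^\infty,L^1)}=\lim_{R\to\infty}\langle\d F_R,g\rangle_{(L^\infty,L^1)}=0$, as required.

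I expect the genuine obstacle to lie in the second stage rather than the first: the cut-off estimate is essentially forced once one sees that the linear growth of $F$ is exactly counterbalanced by the $1/R$ decay of $\d\chi_R$ and then annihilated by the integrability of $g$, but the mollification on $M$ must be arranged carefully enough that the approximating differentials $\d\phi_j$ converge to $\d F_R$ almost everywhere while admitting a uniform $L^\infty$ majorant, so that passing to the limit under the integral is legitimate. An alternative to smoothing $F_R$ would be to exploit the density of smooth compactly supported sections from Lemma~\ref{lem:smooth-compact} on the dual side, but direct mollification of $F_R$ appears to be the cleanest route.
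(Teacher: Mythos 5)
Your proof is correct, but it runs dually to the paper's: the paper truncates the \emph{section}, you truncate the \emph{function}. Specifically, the paper sets $g_k:=g\,h_k$ with a smooth cutoff $h_k$ supported in $B_{2k}$, observes that $\div(g_k)(\d V)=L_gh_k\wedge\d V$ because $\div(g)(\d V)=0$, and then integrates by parts against the Lipschitz function $f$ itself, so the whole argument stays on the $g$-side of the pairing; you instead form $F_R:=F\chi_R$ and add a second stage that mollifies $F_R$ into honest test functions, so that the hypothesis $g\in I(M)$ can be invoked literally through Lemma~\ref{lem:divergence-to-dual-pairing}. The decisive estimate is the same in both proofs: your boundary term $\int_M F\langle\d\chi_R,g\rangle\,\d V$ corresponds exactly (via the Leibniz rule and up to sign) to the paper's $\int_M\div(g_k)(\d V)\wedge f$, and in both cases the linear growth $|F|\leq 2R\,\Lip(F)$ on $B_{2R}$ cancels the $1/R$ decay of the cutoff differential, leaving a multiple of $\Lip(F)\int_{M\setminus B_R}\|g\|\,\d V\to0$ by integrability of $g$. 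What your route buys: the paper's partial integration of $\div(g_k)(\d V)$ against a merely Lipschitz $f$ is itself an extension of Lemma~\ref{lem:divergence-to-dual-pairing} beyond $\mathcal D(M)$ and tacitly needs the same kind of smoothing you perform explicitly, so your second stage makes that reduction airtight; moreover, your argument works verbatim when $M$ is compact (then $F_R=F$ for large $R$ and only the mollification stage remains), so it would unify this proposition with Proposition~\ref{prop:compact-equality}, which the paper instead derives from the global smooth-approximation theorem of \cite{AFLR} together with weak$^*$ continuity of $D$ (Proposition~\ref{prop:wtow}). What the paper's route buys is brevity: a single cutoff computation, no chart-by-chart mollification. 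Two details you should still write out, though neither is a gap: the a.e.\ identity $\d F_R=\chi_R\,\d F+F\,\d\chi_R$ with $\|\d\chi_R\|\leq1/R$ a.e.\ (both factors are Lipschitz, so Proposition~\ref{prop:Elementary_Calculus} applies at a.e.\ common point of differentiability, and $\d\chi_R=0$ a.e.\ off the annulus because $\chi_R$ is locally constant there), and the construction of the $\phi_j$ by writing $F_R=\sum_i\rho_iF_R$ over a finite partition of unity subordinate to charts covering $\supp F_R$ and mollifying each summand in its chart, which yields the uniform gradient bound and the a.e.\ convergence $\d\phi_j\to\d F_R$ needed for dominated convergence.
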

	
	\begin{proof}
		Let $g\in I(M)$. Then, we know that for each test function $\phi\in\mathcal D(M)$,
		\begin{align*}
			\int_M\langle \d \phi,g\rangle\d V = - \int_M\div(g)(\d V)\wedge \phi = 0.
		\end{align*}
		So now let $h\colon \R\to\R$ be a function which satisfies $h(t)=1$ for all $t\leq0$, $h(t)=0$ for all $t\geq1$, $\|h'\|_\infty\leq 2$ and $h\in\mathcal C^\infty(\R)$.
		Then, for $k\in\N$, define
		\begin{align*}
			h_k&\colon M\to \R\colon x\mapsto h\left(\frac{d(0_M,x)}k-1\right)\\
			g_k(x)&:=g(x)h_k(x).
		\end{align*}
		Then,
		\begin{align*}
			\div (g_k)(\d V) &= h_k\div (g)(\d V) + L_gh_k\wedge\d V\\
			L_gh_k(x)&=\langle g,\nabla\rangle h_k = \sum_{i=1}^n g^i(x)h'\left(\frac{d(0,x)}k-1\right)\frac{1}k\frac{\partial d(0,\cdot)}{\partial x^i}(x).
		\end{align*}
		Analogous to the proof of Lemma \ref{lem:test-functions-dense}, we get that $g_k\to g$ in $L^1$ as $k\to\infty$.
		
		On the other hand, since $d(0,x)$ is $1$-Lipschitz in $x$, all partial derivatives exist almost everywhere and are bounded by $1$. Moreover, $h'$ will be 0 everywhere outside of $[0,1)$, which happens iff $x\in B_{2k}\setminus B_k$. It follows that
		\begin{align*}
			|L_gh_k(x)|&\leq \sum_{i=1}^{n}|g^i(x)|\|h'\|\chi_{B_{2k}\setminus B_k}(x) \frac1k \cdot1 \leq\frac{2\sqrt n}k\|g(x)\|_{T^*M}\chi_{B_{2k}\setminus B_k}(x).
		\end{align*}
		
		Thus, thanks to $g_k$ having compact support, we get by partial integration that, for any $f\in\Lip_0(M)$, (and keeping in mind that $\div(g)(\d V)=0$ by definition of $I(M)$):
		\begin{align*}
			\left|\int_M\langle \d f,g_k\rangle\d V\right|
			&= \left|\int_M \div(g_k)(\d V)\wedge f \right|
			\leq \frac{2\sqrt n}k\int_{B_{2k}\setminus B_k}\|g\|_{T^*M}|f|\d V\\
			&\leq \frac{2\sqrt n}k\int_{B_{2k}\setminus B_k}\|g\|_{T^*M}2k\Lip(f)\d V\\
			&\leq {4\sqrt n}\Lip(f)\int_{M\setminus B_k}\|g\|_{T^*M}\d V \overset{k\to\infty} \longrightarrow 0.
		\end{align*}
		
		Together with $\|g_k-g\|_{L^1}\to0$, this gives us the sought-after equation
		\begin{align*}
			\left|\int_M\langle\d f,g\rangle\d V \right| \leq \Lip(f)\|g_k-g\|_1 + \left|\int_M\langle\d f,g_k\rangle\d V\right| \overset{k\to\infty}\longrightarrow 0
		\end{align*}
		for all $f\in\Lip_0(M)$.
	\end{proof}
	
	\begin{proposition}\label{prop:compact-equality}
		If $M$ is compact, then $I(M)=Z_\perp(M)$.
	\end{proposition}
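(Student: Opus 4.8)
The inclusion $Z_\perp(M)\subseteq I(M)$ was established in the lemma preceding Proposition~\ref{prop:complete-equality}, so the plan is to prove the reverse inclusion $I(M)\subseteq Z_\perp(M)$. Fix $g\in I(M)$ and an arbitrary $F\in\Lip_0(M)$; the goal is to show $\langle\d F,g\rangle_{(L^\infty,L^1)}=0$. Since $M$ is compact, the test functions $\mathcal D(M)$ coincide with $\mathcal C^\infty(M)$, so Lemma~\ref{lem:divergence-to-dual-pairing} together with $\div(g)(\d V)=0$ gives, for every smooth $\phi$,
\begin{align*}
	\langle\d\phi,g\rangle_{(L^\infty,L^1)}=-\div(g)(\d V)(\phi)=0.
\end{align*}
Thus $g$ already annihilates the derivatives of all smooth functions, and it remains only to pass from smooth functions to the Lipschitz function $F$ by a weak$^*$ approximation.

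Concretely, I would construct a sequence $(\phi_k)_{k\in\N}$ of smooth functions with $\phi_k(0_M)=0$, with $\phi_k\to F$ pointwise on $M$, and with $\sup_k\Lip(\phi_k)<\infty$. Granting such a sequence, Theorem~2.37 in \cite{Weaver} identifies weak$^*$ convergence with pointwise convergence on norm-bounded subsets of $\Lip_0(M)$, so $\phi_k\to F$ weak$^*$ in $\Lip_0(M)$. By Proposition~\ref{prop:wtow} the operator $D$ is weak$^*$-to-weak$^*$ continuous, whence $\d\phi_k\to\d F$ weak$^*$ in $L^\infty(M,T^*M)$. Pairing this convergence against the fixed $g\in L^1(M,TM)$ then yields
\begin{align*}
	\langle\d F,g\rangle_{(L^\infty,L^1)}=\lim_{k\to\infty}\langle\d\phi_k,g\rangle_{(L^\infty,L^1)}=0,
\end{align*}
because every term on the right vanishes by the previous display. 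As $F$ was arbitrary, $g\in Z_\perp(M)$, which establishes $I(M)\subseteq Z_\perp(M)$ and hence the asserted equality.

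The heart of the argument, and the step I expect to be the main obstacle, is the construction of $(\phi_k)$ with \emph{uniformly bounded} Lipschitz constants: pointwise (indeed uniform) convergence of a mollification is routine, but keeping $\Lip(\phi_k)$ under control on a curved space is not automatic. I would obtain this from the heat semigroup $(e^{t\Delta})_{t>0}$ on the compact manifold $M$: setting $\phi_k:=e^{t_k\Delta}F-(e^{t_k\Delta}F)(0_M)$ for some sequence $t_k\searrow0$ produces smooth members of $\Lip_0(M)$ with $e^{t_k\Delta}F\to F$ uniformly, while the Bakry--\'Emery gradient estimate $|\nabla e^{t\Delta}F|\leq e^{Kt}e^{t\Delta}|\nabla F|$ (valid with $K$ any lower bound for the Ricci curvature, which exists by compactness) gives $\Lip(\phi_k)\leq e^{Kt_k}\Lip(F)$ and hence $\sup_k\Lip(\phi_k)<\infty$. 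Alternatively, and more elementarily, one may mollify chart-by-chart against a fixed finite atlas and glue with a smooth partition of unity; here compactness guarantees that only finitely many charts occur, and the uniform convergence of the local Euclidean mollifications absorbs the bounded contributions of the derivatives of the partition functions into a single constant multiple of $\Lip(F)$, again keeping the Lipschitz constants uniformly bounded.
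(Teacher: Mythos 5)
Your proposal is correct and follows essentially the same route as the paper: annihilation on smooth functions, then a uniformly-Lipschitz smooth approximating sequence converging pointwise to $F$, weak$^*$ convergence via Theorem~2.37 of \cite{Weaver}, and the weak$^*$-to-weak$^*$ continuity of $D$ from Proposition~\ref{prop:wtow}. The only difference is in the one step you flag as the main obstacle: the paper simply cites \cite{AFLR} for the existence of smooth approximations $f_n$ with $\Lip(f_n)\leq\Lip(f)+r$ and $|f_n-f|\leq 2^{-n}$, whereas you construct such a sequence by hand (heat semigroup with the Bakry--\'Emery gradient bound, or chart-by-chart mollification), which is a valid, self-contained substitute.
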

	
	\begin{proof}
		Let $g\in I(M)$. Then, since $\mathcal D(M)=\mathcal C^\infty(M)$ and $\partial M=\emptyset$, we know that
		\begin{align*}
			\int_M\langle\d\phi,g\rangle\d V = \int_{\partial M} \phi g\d V - \int_M \div(g)(\d V)\wedge\phi = 0
		\end{align*}
		for all $\phi\in\mathcal C^\infty(M)$.
		
		Now assume that $f\in\Lip_0(M)$ with $\Lip(f)=L$. Then, for every $r>0$ there exists a sequence of $\mathcal C^\infty(M)$-functions $(f_n)_{n\in\N}$ which satisfies $\Lip(f_n)\leq\Lip(f)+r$ and, for all $p\in M$, $|f(p)-f_n(p)|\leq 2^{-n}$ (see e.g.\@ \cite{AFLR}).
		
		Thus, $(f_n)_{n\in\N}$ is a bounded sequence in $\Lip_0(M)$ and it converges to $f$ pointwise. And on bounded subsets of $\Lip_0(M)$, pointwise convergence is equivalent to weak$^*$ convergence, hence $(f_n)_{n\in\N}$ converges weak$^*$ to $f$.
		
		Since $D$ is weak$^*$-to-weak$^*$ continuous by Prop \ref{prop:wtow}, $(Df_n)_{n \in \N}$ weak$^*$ converges to $Df$.
		
		This by Proposition \ref{prop:L1-Linfty-duality} and the definition of weak$^*$ convergence means that the evaluation $\langle Df,g\rangle_{(L^\infty,L^1)}$ of $Df$ against any $L^1(M,TM)$ section $g$ is equal to the limit of the evaluations $\langle Df_n,g\rangle_{(L^\infty,L^1)}$ as $n\to\infty$. But since the $f_n$ were chosen to lie in $\mathcal C^\infty(M)$, we have already shown that if $g\in I(M)$, then $\langle Df_n,g\rangle_{(L^\infty,L^1)}=0$, thus so does their limit, proving that $g\in Z_\perp(M)$.
	\end{proof}
	
	\begin{corollary}\label{cor:domain-inside-complete}
		Let $M$ be a domain inside a complete orientable Riemannian manifold $N$. Moreover, let $I(M,N)$ be the set of sections that are integrable over $M$ and divergence-free over $N$ when extended to $N$ by zero:
		\begin{align*}
			I(M,N):=\{g\in L^1(M,TM)\colon \exists\hat g\in L^1(N,TN)\colon&\hat g|_M=g,
			\hat g|_{N\setminus M}=0,\\
			&\div(\hat g)(\d V)=0\in \mathcal D'(N)\}.
		\end{align*}
		Then, $I(M,N)=I(M)=Z_\perp(M)$.
	\end{corollary}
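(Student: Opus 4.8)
The plan is to transfer the equality $I(N)=Z_\perp(N)$, already available on the \emph{complete} manifold $N$, down to the domain $M$ by means of the zero-extension $g\mapsto\hat g$. Since $N$ is complete it is either compact or complete and unbounded, so Propositions~\ref{prop:complete-equality} and~\ref{prop:compact-equality} applied to $N$ yield $I(N)=Z_\perp(N)$; this is the fact I intend to lean on throughout. The strategy is to identify $Z_\perp(M)$ with $I(M,N)$ by two inclusions, and then to record how both sit inside $I(M)$.

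First I would prove $Z_\perp(M)=I(M,N)$. For $I(M,N)\subseteq Z_\perp(M)$, take $g\in I(M,N)$ with zero-extension $\hat g$; by construction $\hat g\in I(N)=Z_\perp(N)$, so $\int_N\langle\d F,\hat g\rangle\,\d V=0$ for every $F\in\Lip_0(N)$. Given $f\in\Lip_0(M)$, the \emph{isometric} embedding (so that the intrinsic metric of $M$ agrees with that induced from $N$) lets me McShane-extend $f$ to some $F\in\Lip_0(N)$ with $F|_M=f$; then $\d F=\d f$ a.e.\ on $M$ and, since $\hat g$ vanishes off $M$,
\begin{align*}
	\int_M\langle\d f,g\rangle\,\d V=\int_M\langle\d F,\hat g\rangle\,\d V=\int_N\langle\d F,\hat g\rangle\,\d V=0,
\end{align*}
whence $g\in Z_\perp(M)$. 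For the reverse inclusion, let $g\in Z_\perp(M)$ and form $\hat g\in L^1(N,TN)$, which is integrable because $\int_N\|\hat g\|\,\d V=\int_M\|g\|\,\d V<\infty$. For any $\phi\in\mathcal D(N)$ the restriction $\phi|_M$ is smooth with bounded differential, hence Lipschitz on $M$, so by Lemma~\ref{lem:divergence-to-dual-pairing} applied on $N$,
\begin{align*}
	\div(\hat g)(\d V)(\phi)=-\int_N\langle\d\phi,\hat g\rangle\,\d V=-\int_M\langle\d\phi,g\rangle\,\d V=0.
\end{align*}
As $\phi\in\mathcal D(N)$ was arbitrary, $\div(\hat g)(\d V)=0$ in $\mathcal D'(N)$, i.e.\ $g\in I(M,N)$.

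The two containments into $I(M)$ are routine: $Z_\perp(M)\subseteq I(M)$ is the lemma preceding Proposition~\ref{prop:complete-equality}, and $I(M,N)\subseteq I(M)$ follows because every $\phi\in\mathcal D(M)$ extends by zero to an element of $\mathcal D(N)$, against which $\div(\hat g)$ and $\div(g)$ agree, so $\div(\hat g)=0$ forces $\div(g)=0$ in $\mathcal D'(M)$.

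The remaining direction, $I(M)\subseteq I(M,N)$, is the step I expect to be the main obstacle. Here one must show that the zero-extension of a field which is only divergence-free in the \emph{interior} of $M$ stays divergence-free across $\partial M$. Testing $\div(\hat g)$ against a $\phi\in\mathcal D(N)$ whose support meets $\partial M$ produces the pairing $-\int_M\langle\d\phi,g\rangle\,\d V$, in which $\phi|_M$ is no longer compactly supported inside $M$; the defining condition $g\in I(M)$ then gives no control, and morally this pairing records the flux of $g$ through $\partial M$. To close it I would exhaust $M$ from the inside by cut-off fields $g_k=g\,h_k$ with $h_k$ supported away from $\partial M$, exactly as in the proof of Proposition~\ref{prop:complete-equality}, and try to force the boundary contribution to vanish as $g_k\to g$ in $L^1$. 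The crux is whether the geometry of $M$ near $\partial M$ admits such a compactly-supported exhaustion with controlled boundary term; I expect this to succeed precisely when $M$ carries no accessible boundary flux, which is automatic once $M$ is itself complete (thereby recovering Proposition~\ref{prop:complete-equality}) and is the delicate point on which this final equality rests.
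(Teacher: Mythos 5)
Your proposal establishes exactly the part of the corollary that is actually true, and it does so correctly. Your inclusion $I(M,N)\subseteq Z_\perp(M)$ is essentially the paper's own argument: McShane-extend $f\in\Lip_0(M)$ to $F\in\Lip_0(N)$ (using that $M$ sits isometrically in $N$), observe $\int_M\langle\d f,g\rangle\,\d V=\int_N\langle\d F,\hat g\rangle\,\d V$, and invoke Proposition~\ref{prop:complete-equality} or~\ref{prop:compact-equality} on the complete manifold $N$. Your converse inclusion $Z_\perp(M)\subseteq I(M,N)$ --- pairing $\div(\hat g)(\d V)$ with $\phi\in\mathcal D(N)$ via Lemma~\ref{lem:divergence-to-dual-pairing} and using that $\phi|_M-\phi(0_M)\in\Lip_0(M)$ --- is a genuinely different route from the paper's, which instead asserts that the chain $Z_\perp(M)\subseteq I(M)\subseteq I(M,N)$ holds ``trivially''. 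Your route is the correct one.

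Indeed, the step you flagged as the main obstacle, $I(M)\subseteq I(M,N)$, is not merely hard: it is false in general, and with it the printed statement $I(M,N)=I(M)=Z_\perp(M)$. Take $M=(0,1)\subset N=\R$ and let $g$ be the constant unit field $g(x)=(x,1)$. Then $g\in L^1(M,TM)$ and $\div(g)(\d V)=0$ in $\mathcal D'((0,1))$, so $g\in I(M)$; but for $\phi\in\mathcal D(\R)$,
\begin{align*}
	\div(\hat g)(\d V)(\phi)=-\int_0^1\phi'(x)\,\d x=\phi(0)-\phi(1),
\end{align*}
so $\div(\hat g)(\d V)=\delta_0-\delta_1\neq0$ in $\mathcal D'(\R)$ and $g\notin I(M,N)$; likewise $g\notin Z_\perp(M)$ (pair with $f(x)=x-\tfrac12$). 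This is exactly the boundary-flux phenomenon you described, and no cut-off exhaustion can remove it, because the flux of $g$ through $\partial M$ is genuinely nonzero. So the ``trivial'' inclusion is precisely where the paper's own proof breaks; the correct statement is $I(M,N)=Z_\perp(M)$, with both sets contained in, but in general strictly smaller than, $I(M)$. That identity is what you proved, and it is also the only part used afterwards, namely for $\F(M)\equiv L^1(M,TM)/I(M,N)$ in Remark~\ref{rem:comparison-domain-results}. In short: your proposal is not missing a step; it isolates the part of the corollary that survives, and your diagnosis of the remaining equality as untenable is correct.
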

	
	\begin{proof}
		Trivially, $Z_\perp(M)\subseteq I(M)\subseteq I(M,N)$.
		
		For the other direction, we will use the same approach as in the last part of the proof of Proposition~3.3 in \cite{CKK2016} by Cúth, Kalenda and Kaplick\'y.
		
		Their proof, reproduced here for completeness, goes as follows: choose $g\in I(M,N)$, and let $F\in\Lip_0(M)$. Moreover, let $\tilde F\in\Lip_0(N)$ be any extension of $F$, e.g.\@ the one that exists by MacShane. Then,
		\begin{align*}
			\langle \d F,g\rangle_{(L^\infty,L^1)} &= \int_M\langle \d F,g\rangle \d V = \int_M\langle \d \tilde F,g\rangle \d V\\
			&= \int_N \langle \d \tilde F,\hat g\rangle \d V = \langle \d \tilde F,\hat g\rangle_{(L^\infty,L^1)} = 0.
		\end{align*}
		The last step uses either Proposition \ref{prop:complete-equality} or \ref{prop:compact-equality}, depending on the boundedness of $N$.
	\end{proof}
	
	\begin{theorem}\label{thm:isometry-of-lip-free}
		The Lipschitz-Free Space $\F(M)$ of a complete connected orientable Riemannian manifold $M$ is isometric to $L^1(M,TM)/I(M)$, the space of equivalence classes of integrable sections with regards to the relation $g\sim h \Leftrightarrow \div(g-h)=0$.
	\end{theorem}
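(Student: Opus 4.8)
The plan is to obtain the desired isometry as the pre-adjoint of the operator $D$ and to read off everything from the results already established. Recall that $\F(M)^* = \Lip_0(M)$ by definition of the Lipschitz-free space, and $L^1(M,TM)^* = L^\infty(M,T^*M)$ by Proposition~\ref{prop:L1-Linfty-duality}. Thus $D\colon \Lip_0(M) \to L^\infty(M,T^*M)$ is an operator between two dual Banach spaces, and by Proposition~\ref{prop:wtow} it is weak$^*$-to-weak$^*$ continuous. The standard characterization of adjoints then furnishes a unique bounded pre-adjoint
\[
	{}^*\!D\colon L^1(M,TM) \to \F(M), \qquad ({}^*\!D)^* = D,
\]
characterized by $\langle DF, g\rangle_{(L^\infty,L^1)} = \langle F, {}^*\!D\, g\rangle_{(\Lip_0,\F)}$ for all $F \in \Lip_0(M)$ and $g \in L^1(M,TM)$. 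This ${}^*\!D$, after passing to a quotient, is the candidate isometry.

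Next I would invoke the duality between metric surjections and isometric embeddings: for a bounded operator $T$ between Banach spaces, $T$ is a metric surjection (it maps the open unit ball onto the open unit ball of the target) if and only if its adjoint $T^*$ is an isometric embedding. By part~(i) of Theorem~\ref{prop:range-of-derivative}, $D = ({}^*\!D)^*$ is an isometry, hence an isometric embedding; therefore ${}^*\!D$ is a metric surjection. In particular ${}^*\!D$ is onto $\F(M)$, and the induced map $L^1(M,TM)/\ker({}^*\!D) \to \F(M)$ is an isometric isomorphism.

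It then remains to identify the kernel. Since $\Lip_0(M) = \F(M)^*$ separates the points of $\F(M)$, one has ${}^*\!D\, g = 0$ if and only if $\langle F, {}^*\!D\, g\rangle = 0$ for every $F \in \Lip_0(M)$, which by the defining relation of the pre-adjoint is exactly $\langle \d F, g\rangle_{(L^\infty,L^1)} = 0$ for every such $F$; that is, $\ker({}^*\!D) = Z_\perp(M)$. Finally, because $M$ is complete and connected, Hopf--Rinow forces $M$ to be either compact or unbounded, so Proposition~\ref{prop:compact-equality} (compact case) and Proposition~\ref{prop:complete-equality} (complete and unbounded case) together give $Z_\perp(M) = I(M)$. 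Substituting yields $L^1(M,TM)/I(M) \cong \F(M)$ isometrically.

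The genuinely new content has all been absorbed into the earlier results, so no step here is a serious obstacle; the only points demanding care are the correct bookkeeping of the two dual pairings so that the pre-adjoint relation is stated in the right direction, and the appeal to Hopf--Rinow ensuring the two equality propositions relating $Z_\perp(M)$ and $I(M)$ jointly exhaust all complete manifolds. If one prefers to avoid citing the metric-surjection/isometric-embedding duality as a black box, the same conclusion follows directly: weak$^*$-closedness of $Z(M) = \im D$ (automatic since $D$ is a weak$^*$-continuous isometry arising as an adjoint) gives $Z(M) = (Z_\perp(M))^\perp$, whence $Z(M) \cong (L^1(M,TM)/Z_\perp(M))^*$, and dualizing the isometry $D\colon \Lip_0(M) \to Z(M)$ recovers the claimed isometry at the level of the preduals.
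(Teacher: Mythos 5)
Your proposal is correct and follows essentially the same route as the paper: the paper's proof likewise obtains the isometry as the pre-adjoint of the weak$^*$-to-weak$^*$ continuous isometry $D$, identifying $L^1(M,TM)/Z_\perp(M)$ with the predual of $Z(M)=\im D$ and then substituting $Z_\perp(M)=I(M)$. Your write-up is in fact more detailed than the paper's one-paragraph argument --- the metric-surjection/isometric-embedding duality, the identification $\ker({}^*\!D)=Z_\perp(M)$, and the Hopf--Rinow observation that completeness splits into exactly the compact and unbounded cases covered by Propositions~\ref{prop:compact-equality} and~\ref{prop:complete-equality} are all left implicit there.
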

	
	\begin{proof}
		Since $D$ is an isometric weak$^*$-to-weak$^*$ continuous operator, it has an isometric pre-adjoint ${}^*\!D$ mapping elements of a predual of $Z(M)$ to elements of a predual of $\Lip_0(M)$. In this case, with the dual pairings we have chosen, the predual of $Z(M)$ in question is $L^1(M,TM)/Z_\perp(M)=L^1(M,TM)/I(M)$, and the corresponding predual of $\Lip_0(M)$ is $\F(M)$.
	\end{proof}
	
	\begin{remark}\label{rem:comparison-domain-results}
		Using Corollary \ref{cor:domain-inside-complete}, one also gets that $\mathcal F(M) = L^1(M,TM)/I(M,N)$ in the case that $M$ is a connected Riemannian manifold that lies isometrically as a domain inside a complete Riemannian manifold $N$.
		
		Like in \cite{CKK2016} and \cite{F2024}, the equivalence $Z_\perp(M)=I(M)$ requires us to somehow `see' whether the integrable sections on the boundary behave well enough to be extended by zero outside of $M$ in a larger complete space, and still have divergence zero on said larger space. In a manifold $M$, such `outside' structure need not exist, hence the additional requirement that it be a submanifold of a complete space.
	\end{remark}
		
	Lastly, note that we crucially require connectedness to make $M$ a `genuine' metric space without infinite distances, so that we can meaningfully talk about $\Lip_0(M)$ and $\F(M)$.
	
	It might be possible to adapt the proofs in this paper to also apply to more general types of connected smooth manifolds, for example (symmetric, orientable) Finsler manifolds, as Flores' result \cite{F2024} and this Remark both provide independent partial answers in the positive. The underlying theory of distributional forms developed in \cite{GKOS} certainly holds for all orientable paracompact smooth Hausdorff manifolds.
	
	\subsection*{Acknowledgments} 
	The author wishes to thank their advisors Christian Bargetz and Tommaso Russo for helpful discussions and suggestions to improve this paper, and Birgit Schörkhuber for relevant pointers, as well as the reviewer for their detailed report on what to improve, and their helpful suggestions on how to do that.
	
	The author gratefully acknowledges the support by the Vice-Rectorate for Research at the University of Innsbruck in the form of a PhD scholarship.
	
	\subsection*{Data Availability}
	This paper does not rely on any outside data.
	
	%-------------------------------------------------------%
	%                                                       %
	% 						BIBLIOGRAPHY    				%
	%                                                       %
	%-------------------------------------------------------%

\end{document}